\documentclass{amsart}
\usepackage{stmaryrd}
\usepackage{silence}
\usepackage{upgreek}
\usepackage{amssymb}
\usepackage{faktor}
\usepackage[all,cmtip]{xy}
\usepackage{amsmath} 
\usepackage{mathrsfs}
\usepackage{tikz}
\usepackage{tikz-cd}
\usepackage{enumitem}
\usepackage{mathtools}
\usepackage[mathcal]{euscript}
\usepackage{graphicx}
\usepackage{url}
\usepackage{hyperref}
\usepackage[T1]{fontenc}
\usepackage{bbm}

\usetikzlibrary{shapes.geometric}
\usetikzlibrary{decorations.markings}
\usetikzlibrary{patterns,decorations.pathreplacing}

\usepackage{ragged2e}

\definecolor{coloryellow}{RGB}{240,228,66}
\definecolor{colorskyblue}{RGB}{86,180,233}
\definecolor{colorvermillion}{RGB}{213,94,0}

\DeclareMathOperator{\Hom}{Hom}

\DeclareSymbolFont{sfletters}{OT1}{cmss}{m}{n}
\DeclareMathSymbol{\sTheta}{\mathord}{sfletters}{"02}

\theoremstyle{definition}
\newtheorem{definition}{Definition}[section]

\newtheorem{example}[definition]{Example}

\theoremstyle{plain}
\newtheorem{proposition}[definition]{Proposition}
\newtheorem{lemma}[definition]{Lemma}
\newtheorem{corollary}[definition]{Corollary}
\newtheorem{theorem}[definition]{Theorem}

\theoremstyle{remark}
\newtheorem{remark}[definition]{Remark}

\makeatletter
\@addtoreset{definition}{section}
\makeatother

\usepackage{marginnote}
    \DeclareFontFamily{U}{wncy}{}
    \DeclareFontShape{U}{wncy}{m}{n}{<->wncyr10}{}
    \DeclareSymbolFont{mcy}{U}{wncy}{m}{n}
    \DeclareMathSymbol{\Sha}{\mathord}{mcy}{"58}

\newsavebox{\foobox}

\title{Configuration spaces and the Arone--Mahowald theorem}
\author{Ben Knudsen}
\author{Dezhou Li}
\begin{document}
\begin{abstract}
We take up the study, initiated by Fred Cohen, of the Cartan--Leray spectral sequence for Euclidean configuration spaces, establishing a combinatorial decomposition as a direct sum of simpler spectral sequences. As an immediate consequence, we recover a difficult theorem of Arone--Mahowald on the vanishing of Goodwillie derivatives of the identity functor.
\end{abstract}
\maketitle

\section{Introduction}

In Cohen's landmark study of the mod $p$ cohomology of Euclidean configuration spaces \cite[III]{CohenLadaMay:HILS}, the starting point is a spectral sequence. Specifically, writing $F_k=F_k(\mathbb{R}^n)$ for the configuration space of $k$ ordered points in $n$-dimensional Euclidean space (resp. $B_k$, unordered), Cohen considers the Cartan--Leray spectral sequence $E(k)$ for the cover $F_k\to B_k$. In a beautiful argument, Cohen completely determines the structure of $E(p)$, but, rather than continuing, he uses this narrow result as a seed calculation in an indirect approach, which leverages detailed knowledge of the homology of iterated loop spaces. The direct approach via the spectral sequence he pronounces possible in principle but ``formidably complicated.''

The purpose of this work and its intended sequel is to take up this challenge. Our program is motivated by the fact that Cohen's calculation, organized according to operadic principles, obscures a different algebraic structure in cohomology, arising from coordinate projections among ordered configuration spaces. The Cartan--Leray spectral sequence is well suited to the study of this structure, which we aim to systematize and exploit in future work.

In order to state the results of the present paper, fix $n>1$ and a prime $p$. Already implicit in Cohen's work is a decomposition of the second page of the spectral sequence in terms of certain subrepresentations $M_\lambda\subseteq H^*(F_k;\mathbb{F}_p)$, taking the form
\[E(k)_2\cong \bigoplus_{\lambda\vdash k}H^*(\Sigma_k; M_\lambda),\] which we show is reflective of a decomposition of spectral sequences. We will say that the partition $\lambda\vdash k$ is \emph{subordinate} to $\mu\vdash k$, written $\lambda\preccurlyeq\mu$, if $\lambda$ is obtained from $\mu$ by splitting a block of size $pr$ into $p$ blocks of size $r$, perhaps multiple times. 

\begin{theorem}[Decomposition theorem]\label{thm:decomposition}
If $\mu\vdash k$ is maximal under subordinacy, then there is a spectral subsequence $E(k,\mu)\subseteq E(k)$ with second page $E(k,\mu)_2\cong \bigoplus_{\lambda\preccurlyeq\mu} H^*(\Sigma_k; M_\lambda)$. Moreover, there is an isomorphism of spectral sequences
\[E(k)\cong\bigoplus_{\mu}E(k,\mu),\] where $\mu$ ranges over partitions of $k$ maximal under subordinacy.
\end{theorem}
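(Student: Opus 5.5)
We will realize the decomposition at the level of cochains before passing to spectral sequences. The Cartan--Leray spectral sequence $E(k)$ is the spectral sequence of the double complex $\Hom_{\Sigma_k}(P_\bullet, C^*(F_k;\mathbb{F}_p))$, where $P_\bullet$ is a projective resolution of $\mathbb{F}_p$. Equivalently, it is the hypercohomology spectral sequence of the $\Sigma_k$-equivariant cochain complex $C^*(F_k)$. So it suffices to split $C^*(F_k)$, up to $\Sigma_k$-equivariant quasi-isomorphism, as a direct sum of equivariant complexes $C(\mu)$ indexed by maximal $\mu$, with $H^*(C(\mu))\cong\bigoplus_{\lambda\preccurlyeq\mu}M_\lambda$. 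Naturality of the hypercohomology spectral sequence in the coefficient complex then gives subsequences $E(k,\mu)$ whose direct sum is $E(k)$. Their second pages are $H^*(\Sigma_k;H^*(C(\mu)))=\bigoplus_{\lambda\preccurlyeq\mu}H^*(\Sigma_k;M_\lambda)$.

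\textbf{Step 1: combinatorial grading.} Recall that $H^*(F_k)$ has a basis of forest monomials in the Arnold classes $\alpha_{ij}$. Each monomial determines a set partition of $\{1,\dots,k\}$ by its connected components. The $\Sigma_k$-orbit type of this partition is an integer partition $\lambda\vdash k$, and $M_\lambda$ is the span of the monomials of that type. This grading by $\lambda$ is $\Sigma_k$-stable, because the Arnold relation preserves components. Call two partitions equivalent when they are connected by a chain of subordinacy moves. Each equivalence class contains a unique maximal element, obtained by merging $p$ equal blocks of size $r$ into one block of size $pr$ for as long as possible. Uniqueness needs a confluence check: merges commute, so base-$p$ style normal forms are unique. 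This yields a decomposition $H^*(F_k)=\bigoplus_\mu N_\mu$ with $N_\mu=\bigoplus_{\lambda\preccurlyeq\mu}M_\lambda$.

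\textbf{Step 2: lifting to cochains.} Use a $\Sigma_k$-equivariant model of $C^*(F_k;\mathbb{F}_p)$ that is compatible with the component filtration. One choice is the Fulton--MacPherson / Kontsevich graph-complex style formality model, or cellular cochains on the Fox--Neuwirth/Salvetti-type complex; in either case cells are labeled by combinatorial data with associated set partitions. Alternatively, filter $C^*(F_k)$ by the coarsening order on set partitions, for instance via the stratification by collisions/diagonals. We then show that the only nontrivial equivariant extensions (the differentials of $E(k)$) occur between summands $M_\lambda$ and $M_{\lambda'}$ with $\lambda,\lambda'$ equivalent. The expected mechanism is that an extension class in $\mathrm{Ext}_{\Sigma_k}(M_\lambda,M_{\lambda'})$ is computed by restricting to stabilizers, which are products of wreath products. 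When $\lambda\not\sim\lambda'$, the relevant induced modules have Sylow data that are incompatible mod $p$, so the Ext vanishes. This is the step where subordinacy genuinely enters: $p$ equal blocks of size $r$ produce a $\Sigma_p\wr\Sigma_r$-type stabilizer whose mod $p$ cohomology sees a block of size $pr$.

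\textbf{Main obstacle.} The hard step is Step 2: producing an honest equivariant splitting rather than only the associated graded decomposition. I would first try to construct idempotents. For each set partition $\pi$, let $e_\pi$ be the projection of $C^*(F_k)$ onto the part supported near the stratum where the blocks of $\pi$ collide, using a Fulton--MacPherson-type cellular model. We would then sum $e_\pi$ over the set partitions whose type lies in a given equivalence class. These sums are $\Sigma_k$-equivariant since classes are $\Sigma_k$-invariant. The remaining task is to check that they commute with the differential, which reduces to showing that the coboundary only merges or splits blocks along subordinacy moves, up to terms that cancel mod $p$. If this fails at the cochain level, the fallback is to argue on pages: prove by induction on $r$ that every $d_r$ maps the $\mu$-summand into itself, using transfer to Sylow subgroups to reduce to the known case $E(p)$ computed by Cohen.
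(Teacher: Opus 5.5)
\textbf{There is a genuine gap: Step 2, which carries the whole content of the theorem, is not an argument.} Your reduction is sound: a $\Sigma_k$-equivariant splitting of $C^*(F_k;\mathbb{F}_p)$ in the derived category would split the hypercohomology spectral sequence. Step 1 is the paper's Lemma~\ref{lem:unique maximals}. But none of the three mechanisms you sketch for Step 2 works as stated.

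\emph{Ext vanishing.} The vanishing of $\mathrm{Ext}_{\Sigma_k}(M_\lambda,M_{\lambda'})$ for inequivalent $\lambda,\lambda'$ rests only on a heuristic about ``Sylow data.'' It also already contains a statement of Arone--Mahowald type. For $n$ odd and $k$ not a power of $p$, the partitions $(k)$ and $(1^k)$ lie in different classes. So the gluing of $M_k$ (degree $(n-1)(k-1)$) to $H^0=\mathbb{F}_p$ would require $\mathrm{Ext}^{(n-1)(k-1)+1}_{\Sigma_k}(M_k,\mathbb{F}_p)=H^{(n-1)(k-1)+1}(\Sigma_k;M_k^\vee)=0$. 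That is exactly the kind of vanishing the paper obtains as a \emph{consequence} of the decomposition, not as an input.

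\emph{Cochain idempotents.} You give no reason the sums of $e_\pi$ should commute with the coboundary of any cellular model. The mod $p$ arithmetic that separates subordinacy classes appears after passing to group cohomology through transfers, not in the cochain differential.

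\emph{Sylow fallback.} Restricting to a Sylow subgroup $S$ produces a spectral sequence with $E_2=H^*(S;H^*(F_k))$. This does not reduce to Cohen's $E(p)$ without further work that you do not supply.

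\emph{Target too strong.} Demanding $H^*(C(\mu))\cong\bigoplus_{\lambda\preccurlyeq\mu}M_\lambda$ as $\Sigma_k$-modules is strictly stronger than the theorem. The functor $H^*(\Sigma_k;-)$ is blind to, for example, summands outside the principal block, and nothing controls how those are glued across classes.

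\textbf{The missing idea is to build self-maps of $E(k)$ itself from geometry plus transfer, without ever splitting cochains.} The paper uses two maps:
\begin{itemize}
\item the map in $i_\mu\colon F_\mu\to F_k$, which is only $P_\mu$-equivariant and so is paired with $\mathrm{res}^{\Sigma_k}_{P_\mu}$;
\item the map out $p_\mu\colon F_k\to F_\mu$, paired with $\mathrm{cor}^{\Sigma_k}_{P_\mu}$.
\end{itemize}
These give maps of spectral sequences $E(k)\to E(\mu)\cong\bigotimes_i E(\mu_i)\to E(k)$, between which one inserts the projections $\rho_{(\mu_i)}$ available by strong induction on $k$. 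The double coset formula, together with the vanishing of $i_\pi^*p_\rho^*$ on top classes when $\rho$ does not fit into $\pi$, shows how the composite acts on $H^*(\Sigma_k;M_\lambda)$: by the integer $c_{\lambda,\mu}$ when $\lambda\preccurlyeq\mu$, and by zero otherwise. The constant $c_{\lambda,\mu}$ counts block distributions, so it is a coefficient of $\prod_i\sum_j x_j^{\lambda_i}$. The freshman's dream and Lucas' theorem then make it a unit mod $p$.

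Proposition~\ref{prop:projection extends} finishes the argument. A self-map of $E$ that is an isomorphism on $V$ and zero on $W$ at $E_2$ forces both to span spectral subsequences, and Zeeman comparison gives $E\cong E'\oplus E''$. The case $\mu=(k)$ is handled by the complementary projection. This transfer-based step is where the mod $p$ input actually enters, and your proposal has no substitute for it.
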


The representations $M_\lambda$ have deep algebraic, combinatorial, and homotopical significance, some of which are explored in \cite[\S2]{HershReiner:RSCCSR}, where they are denoted $\mathrm{Lie}_\lambda$ for $n$ odd and $\mathrm{W}_\lambda$ for $n$ even. In algebra, they are intimately tied to Lie theory, being induced from tensor products of various Lie representations; in combinatorics, they are closely connected to the Whitney homology of partition lattices; and, in homotopy theory, they appear in the study of the Goodwillie derivatives of the identity functor, evaluated on spheres. In this latter connection, they are the subject of a difficult calculation of Arone--Mahowald \cite{AroneMahowald:GTIFUPHS}, which essentially falls out of our decomposition theorem. 

As a matter of terminology, we say that $\lambda\vdash k$ is \emph{special} for $n$ odd if the size of every block is a power of $p$; for $n$ even, we also allow blocks of size twice a power of $p$.

\begin{corollary}[Arone--Mahowald theorem]\label{cor:am}
If $\lambda\vdash k$ is not special, then, for every $s>0$, we have \[H^s(\Sigma_k;M_\lambda)=0.\]
\end{corollary}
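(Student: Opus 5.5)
We will deduce the corollary from the Decomposition theorem by exploiting the fact that the Cartan--Leray spectral sequence for the free $\Sigma_k$-cover $F_k\to B_k$ converges to $H^*(B_k;\mathbb{F}_p)$. This target vanishes above degree $(n-1)(k-1)$, since $B_k$ has the homotopy type of a finite complex of that dimension. Because $M_\lambda$ is concentrated in a single cohomological degree, namely $(n-1)(k-\ell(\lambda))$ where $\ell(\lambda)$ is the number of blocks, the summand $H^s(\Sigma_k;M_\lambda)$ sits in total degree $s+(n-1)(k-\ell(\lambda))$.

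\textbf{Step 1: reduce to one summand of the decomposition.} Fix a partition $\mu$ that is maximal under subordinacy. The Decomposition theorem shows that $E(k,\mu)$ is a direct summand of $E(k)$. Its abutment is therefore a summand of $H^*(B_k)$, and in particular it vanishes in total degree greater than $(n-1)(k-1)$.

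\textbf{Step 2: show that non-special partitions are the maximal ones, and that they are isolated.} We first show that if $\lambda$ is not special, then $\lambda$ is maximal. Every block of a non-special $\lambda$ is already in the ``top'' position. For $n$ odd, a block of size $m$ that is not a power of $p$ can never be produced by splitting. For blocks of size $p^a$, however, one can merge $p$ equal blocks $r$ into $pr$, so maximality is not automatic.

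We therefore refine the argument. For each $\lambda$, consider $\mu$ maximal with $\lambda\preccurlyeq\mu$. The key structural claim is that within $E(k,\mu)$, the differentials only connect $M_\lambda$-terms to $M_{\lambda'}$-terms with $\lambda,\lambda'\preccurlyeq\mu$. The main work is to show that for non-special $\lambda$ the whole subordinacy class behaves like a product of a class for special parts with a ``free'' factor.

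Concretely, $M_\lambda$ is induced from a Young-type subgroup containing the wreath product of Lie representations. By Shapiro's lemma, $H^*(\Sigma_k;M_\lambda)$ becomes group cohomology of $\prod_m \Sigma_{j_m}\wr\Sigma_m$ with coefficients in $\bigotimes \mathrm{Lie}_m^{\otimes j_m}$ (suitably twisted). It therefore suffices to show that $H^{>0}(\Sigma_m;\mathrm{Lie}_m)=0$ (or the $\mathrm{W}$ analogue for $n$ even) when $m$ is not a power of $p$ (or twice one), which would follow from projectivity of $\mathrm{Lie}_m$ at such $m$.

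\textbf{Step 3: obtain projectivity from the spectral sequence.} We prove projectivity using the spectral sequence itself. For $\mu=(m)$ a single block, $E(m,(m))$ has $E_2=H^*(\Sigma_m;M_{(m)})$ concentrated on the top row $(n-1)(m-1)$. No differential can enter or leave this row within $E(m,(m))$, because $(m)$ is subordinate only to itself when $m$ is not of the special form, so the summand is a single row. The spectral sequence then degenerates, and $H^s(\Sigma_m;M_{(m)})$ injects into $H^{s+(n-1)(m-1)}(B_m)=0$ for $s>0$. Vanishing of all positive Tate cohomology over a $p$-Sylow subgroup yields projectivity, and projectivity propagates through tensor products and induction to the general $\lambda$.

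\textbf{Main obstacle.} We expect the hardest step to be the reduction from general non-special $\lambda$ to single blocks. This requires care with wreath products, where $\Sigma_{j}$ permutes equal blocks, because projectivity of the single-block factors must survive the wreath construction. The saving point is that a tensor power of a projective module, induced along a wreath product, remains projective.
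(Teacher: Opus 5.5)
\textbf{There is a genuine gap: the single-block case is only proved when $p\nmid m$, and the two-step route through projectivity fails.}

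Your Step~3 rests on a false claim. That $(m)$ is subordinate only to itself just says $(m)$ is maximal, which holds for every one-block partition. What you need is that nothing lies strictly \emph{below} $(m)$, and that holds only when $p\nmid m$. In general $E(m,(m))_2=\bigoplus_{\nu\preccurlyeq(m)}H^*(\Sigma_m;M_\nu)$. For non-special $m$ divisible by $p$ (say $p=2$, $m=6$, where $(3,3)\prec(6)$), the top-row classes can support differentials $d_r$ into $H^{s+r}(\Sigma_m;M_\nu)$ with $\nu\prec(m)$. So they need not survive, and the vanishing line tells you nothing about them. Your argument therefore only covers $p\nmid m$, which is the elementary case anyway: a Sylow subgroup then lies in $\Sigma_{m-1}$, over which $M_m$ is free.

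The missing idea is the paper's strong induction on $k$, which runs in the opposite order to yours. First, every $\lambda\neq(k)$ is treated via Shapiro's lemma, the K\"{u}nneth theorem, and the Lyndon--Hochschild--Serre spectral sequence for $P_\lambda\le W_\lambda$, using the result for strictly smaller blocks. Then each $\nu\prec(k)$ is non-special by Lemma~\ref{lem:specialness} and has several blocks, so $H^{>0}(\Sigma_k;M_\nu)=0$. Since Theorem~\ref{thm:decomposition} confines differentials out of $H^*(\Sigma_k;M_k)$ to these summands, the top row survives to $E_\infty$. The single-block case for $k$ needs the multi-block case below $k$, so it cannot be settled first.

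The projectivity detour fails as well, at both of its steps. First, the spectral sequence only gives $H^{>0}(\Sigma_m;M_m)=0$ over $\Sigma_m$, whereas the Sylow criterion needs vanishing over a Sylow subgroup. Restriction to that subgroup is injective but generally not surjective on cohomology, and vanishing over $G$ does not force projectivity. For instance, with $G=C_6$ and $p=3$, the character inflated from the sign of $C_2$ has $H^*(G;-)=0$ in every degree, but it is one-dimensional and hence not projective.

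Second, the propagation you call the saving point is false. The module $\mathbb{F}_p[\Sigma_m]^{\otimes j}$, with $\Sigma_m\wr\Sigma_j$ acting and permuting factors, is $\mathrm{Ind}_{\Sigma_j}^{\Sigma_m\wr\Sigma_j}\mathbb{F}_p$. Its cohomology is $H^*(\Sigma_j;\mathbb{F}_p)$, which is nonzero in positive degrees once $j\ge p$. Likewise, an outer tensor product of a projective with a non-projective factor, such as those coming from special blocks, is not projective over the product group.

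The reduction must be done on cohomology, via K\"{u}nneth and Lyndon--Hochschild--Serre, and what it then needs is vanishing of the non-special factor's cohomology. The example $\lambda=(m,1^p)$ shows this must include degree $0$: there $H^*(\Sigma_{m+p};M_\lambda)\cong H^*(\Sigma_m;M_m)\otimes H^*(\Sigma_p;\mathbb{F}_p)$. The top-row argument alone does not see degree $0$.
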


In future work, we will further exploit the structure of the Cartan--Leray spectral sequence to determine $H^*(\Sigma_k;M_\lambda)$ in the non-vanishing cases, recovering the descriptions of mod $p$ spectral Lie algebra power operations given in \cite{AntolinCamarena:MTHFSLA, Kjaer:OPHFAOSLO}.

\subsection{Conventions} We work in cohomological grading, and graded objects are concentrated in non-negative degrees. We write $\langle k \rangle=\{1,\ldots k\}$, and we write $\Sigma_k$ for the group of bijections from this set to itself. We assume basic familiarity with the theory of group cohomology, a general reference for which is \cite{Brown:CG}.

\subsection{Acknowledgments}
The first author thanks Haynes Miller for his support in pursuing this long-and-still-gestating project; Dev Sinha for conversation, inspiration, and for writing \cite{Sinha:HLDO}, without which this paper would not exist; and Mike Hopkins, Jacob Lurie, and everyone in the 2017 Thursday seminar at Harvard, where he first learned of the Arone--Mahowald theorem. He was supported during the writing of this paper by NSF grant DMS-2551600.

\section{Spectral sequences}

Our purpose here is to develop the basic tools concerning the construction and manipulation of spectral sequences on which our later arguments will rely. We begin with a general construction in group cohomology specializing to a number of examples critical to our main argument. Although this material is well known---see \cite[VII]{Brown:CG}, for example---we have chosen a relatively self-contained exposition. We next turn to the question of understanding how and when we may split summands off of spectral sequences, a question of obvious relevance to Theorem \ref{thm:decomposition}. Throughout, we assume basic familiarity with spectral sequences and direct the reader to \cite{McCleary:UGSS} for reminders.

\subsection{Spectral sequences in group cohomology}

Throughout this section, we fix a group $G$, not necessarily finite, and an arbitrary commutative ground ring $R$, not necessarily a field, which we mostly suppress from the notation. Given a cochain complex $C$ of $G$-modules, we write $H^*(G;C)$ for the cohomology of the bicomplex $\Hom_G(W, C)$, where $W$ is any free resolution of the ground ring as a (trivial) $G$-module. 

All of the spectral sequences considered in this paper are derived from the following basic construction.

\begin{proposition}\label{prop:general ss}
Given a cochain complex $C$ of $G$-modules, there are spectral sequences of the form
\begin{align*}
E_2^{s,t}&\cong H^s(G;H^t(C))\implies {H}^{s+t}(G;C)\\
E_1^{s,t}&\cong H^t(G; C^s)\implies {H}^{s+t}(G;C).
\end{align*}
\end{proposition}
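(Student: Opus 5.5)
The plan is to obtain both spectral sequences from the double complex $D^{s,t}=\Hom_G(W_s, C^t)$, together with its two natural filtrations. Here $W$ is a free resolution of $R$ by $G$-modules. Since $C$ and $W$ are concentrated in non-negative degrees, $D$ lives in the first quadrant. So both the filtration by the resolution degree $s$ and the filtration by the cochain degree of $C$ are bounded in each total degree, hence finite there. Each therefore yields a strongly convergent spectral sequence abutting to the cohomology of the total complex, which is $H^{*}(G;C)$ by definition.

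\emph{Second spectral sequence.} Filter by the degree of $C$. Its associated graded in filtration $s$ is the complex $\Hom_G(W,C^s)$, with differential induced from $W$. Its cohomology in degree $t$ is $H^t(G;C^s)$, the group cohomology of the single $G$-module $C^s$. This gives $E_1^{s,t}\cong H^t(G;C^s)$, with $d_1$ induced by the differential of $C$. Convergence to $H^{s+t}(G;C)$ follows from the finiteness observed above.

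\emph{First spectral sequence.} Filter by the resolution degree. The $E_0$ page is $\Hom_G(W_s,C)$ with the differential coming from $C$. Because each $W_s$ is a free $G$-module, say $W_s\cong\bigoplus_{i\in I_s} R[G]$, we have $\Hom_G(W_s,-)\cong\prod_{I_s}(-)$. This functor is exact, so it commutes with taking cohomology, and
\[E_1^{s,t}\cong \Hom_G(W_s,H^t(C)).\]
The $d_1$ is induced by the differential of $W$. Taking its cohomology gives $E_2^{s,t}\cong H^s(G;H^t(C))$, again converging to $H^{s+t}(G;C)$.

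The only points needing care, none of them a serious obstacle, are the following.
\begin{itemize}
\item Exactness of $\Hom_G(W_s,-)$. Freeness suffices here; over a general ring $R$ one uses that $R[G]$-free modules are projective, and products are exact.
\item Independence of the choice of $W$. Any two free resolutions are chain homotopy equivalent over $G$, so this equivalence induces isomorphisms of the double complexes' filtrations from the $E_1$ (respectively $E_2$) page onward.
\item Sign conventions for the total differential. These are standard and do not affect the identifications above.
\end{itemize}
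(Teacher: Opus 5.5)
Your proposal is correct and follows the same route as the paper. Both spectral sequences come from the two filtrations of the bicomplex $\Hom_G(W,C)$: exactness of $\Hom_G(W^s,-)$ for free $W^s$ gives $E_1\cong\Hom_G(W,H^t(C))$ and hence the stated $E_2$-page, while the other filtration gives $E_1^{s,t}\cong H^t(G;C^s)$; your remarks on first-quadrant convergence and independence of the choice of $W$ just supply details the paper leaves implicit.
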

\begin{proof}
For the first spectral sequence, we consider the spectral sequence for the bicomplex obtained by first taking cohomology with respect to the differential in $C$. Since $W$ is free, the functor $\Hom_G(W^s,-)$ is exact for each $s\geq0$, so we have $E_1^{*,t}\cong \Hom_G(W,H^t(C))$ as cochain complexes, implying the claimed description of the second page. The second spectral sequence is obtained by first taking cohomology with respect to the differential in $W$.
\end{proof}

From this general result, we derive our primary tool, the Cartan--Leray spectral sequence.

\begin{corollary}[Cartan--Leray]\label{cor:cartan leray}
Let $q:E\to B$ be a regular covering space with deck group $G$. There is a spectral sequence of the form
\[E_2^{s,t}\cong H^s(G; H^t(E))\implies H^{s+t}(B).\]
\end{corollary}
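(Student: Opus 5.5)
We plan to deduce the Cartan--Leray spectral sequence from the first spectral sequence of Proposition \ref{prop:general ss}, taking $C$ to be the singular cochain complex $C^*(E)$ with coefficients in $R$. The deck group $G$ acts on $E$ by homeomorphisms, hence on the singular chains $C_*(E)$ by chain maps, and so $C^*(E)=\Hom(C_*(E),R)$ is a cochain complex of $G$-modules, with $G$ acting through precomposition with the inverse. Proposition \ref{prop:general ss} then gives a spectral sequence
\[E_2^{s,t}\cong H^s(G;H^t(E))\implies H^{s+t}(G;C^*(E)).\]
The remaining task is to identify the abutment $H^*(G;C^*(E))$ with $H^*(B)$.

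For this identification we use the key fact about regular coverings. The group $G$ acts freely and properly discontinuously on $E$, with quotient $E/G\cong B$. Consequently $G$ permutes the singular simplices of $E$ freely: a simplex fixed by $g\neq 1$ would have all of its points fixed, which is impossible. So each $C_s(E)$ is a free $\mathbb{Z}G$-module, with a basis given by a choice of one lift of each singular simplex in $B$. Unique path lifting says that every simplex $\sigma:\Delta^s\to B$ lifts, and the lifts form a single free $G$-orbit. Hence $q_*$ induces an isomorphism of chain complexes $C_*(E)\otimes_{G}\mathbb{Z}\cong C_*(B)$, and dually
\[\Hom_G(C_*(E),R)\cong C^*(B).\]

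Next we compare $\Hom_G(W,C^*(E))$ with $C^*(E)^G$. By adjunction,
\[\Hom_G(W,\Hom(C_*(E),R))\cong \Hom_G(W\otimes C_*(E),R),\]
where $G$ acts diagonally. We use the second spectral sequence of Proposition \ref{prop:general ss}, which filters by cochain degree $s$ in $C$, so that
\[E_1^{s,t}=H^t(G;C^s(E))=H^t\bigl(G;\Hom(C_s(E),R)\bigr).\]
Since $C_s(E)$ is $\mathbb{Z}G$-free, $\Hom(C_s(E),R)$ is a coinduced module, a product of copies of $\Hom(\mathbb{Z}G,R)$. By Shapiro's lemma its higher cohomology vanishes, while $H^0$ is its module of invariants, $\Hom_G(C_s(E),R)=C^s(B)$. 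The spectral sequence is therefore concentrated on the row $t=0$, with $E_1^{*,0}=C^*(B)$ and $d_1$ equal to the coboundary of $B$. This gives $E_2=H^*(B)$ on that row. Degeneration follows for degree reasons, so $H^*(G;C^*(E))\cong H^*(B)$, and substituting into the first spectral sequence yields the corollary.

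The main obstacle is convergence of this second spectral sequence. The bicomplex lies in the first quadrant, since both $W$ and $C^*(E)$ are nonnegatively graded. Each total degree therefore meets only finitely many bidegrees, so the filtration is finite in each total degree and both spectral sequences converge strongly. The other subtle point is the use of Shapiro's lemma for the infinite product when $G$ is infinite. We handle it by noting that $\Hom(C_s(E),R)\cong\Hom(\mathbb{Z}G,\prod R)$ is coinduced and applying Shapiro directly; this is legitimate because $C_s(E)$ is free on a set of orbit representatives.
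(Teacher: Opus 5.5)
Your proposal is correct and is essentially the paper's proof: you apply Proposition \ref{prop:general ss} to $C=C^*(E)$, and you use freeness of the $G$-action on singular simplices to show that the second spectral sequence is concentrated on the row $t=0$, where it is $C^*(E)^G\cong C^*(B)$. You justify the row-vanishing via coinduced modules and Shapiro's lemma, which is slightly more careful than the paper's assertion that $C^s(E)$ is injective; that assertion holds literally only for self-injective $R$, but the paper only ever uses it in the form $H^{>0}(G;C^s(E))=0$, which your argument establishes.
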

\begin{proof}
We apply Proposition \ref{prop:general ss} to the complex $C=C^*(E)$. The $E_2$-page of the first spectral sequence is as desired; to identify the $E_\infty$-page, we consult the second spectral sequence. By our assumption on $q$, the action of $G$ is free on $E$, and hence on $C_s(E)$ for each $s\geq0$, so $C^s(E)$ is an injective $G$-module. Thus, the first page of this second spectral sequence is concentrated on the $t=0$ row, which takes the form
\[E_1^{*,0}\cong C^*(E)^G\simeq C^*(B),
\] where we have used that $G$ acts freely on $E$. Thus, the second spectral sequence collapses to $H^*(B)$ which is necessarily also the target of the first.
\end{proof}

We will also have use for an analogue of the Cartan--Leray spectral sequence for compactly supported cohomology. 

\begin{corollary}\label{cor:cs cartan leray}
Let $G$ be a finite group acting freely on the orientable manifold $M$, and write $N=M/G$. There is a spectral sequence of the form
\[E_2^{s,t}\cong H^s(G; H_c^t(M; d^*\mathrm{sgn}))\implies H_c^{s+t}(N; \mathbb{Z}^w),\] where $d:G\to C_2$ is the degree homomorphism induced by the action of $G$ on $M$, $\mathrm{sgn}$ denotes the sign representation, and $\mathbb{Z}^w$ denotes the orientation local system.
\end{corollary}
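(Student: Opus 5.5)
We will mimic the proof of Corollary \ref{cor:cartan leray}, applying Proposition \ref{prop:general ss} to a cochain complex computing compactly supported cohomology with twisted coefficients. The natural choice is $C=C_c^*(M)\otimes d^*\mathrm{sgn}$, the compactly supported cochains of $M$ with $G$ acting diagonally on the cochains and on the sign representation pulled back along $d$. Concretely, we may take $C_c^*(M)=\operatorname{colim}_K C^*(M, M\setminus K)$ over compact $K\subseteq M$. Since $G$ is finite, the action of $G$ preserves the indexing system of compacts, so $C$ is a complex of $G$-modules. The first spectral sequence of Proposition \ref{prop:general ss} then has $E_2^{s,t}\cong H^s(G;H^t(C))=H^s(G;H_c^t(M;d^*\mathrm{sgn}))$, which is the claimed second page. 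Tensoring with a rank-one free module does not change the underlying cohomology, so $H^t(C)$ is $H_c^t(M)$ with its $G$-action twisted by $d$.

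To identify the abutment, we use the second spectral sequence, $E_1^{s,t}\cong H^t(G;C^s)$. We will argue that each $C^s$ is cohomologically trivial, so that this page is concentrated on the row $t=0$ with $E_1^{*,0}\cong (C^*)^G$. The easiest route is to model compactly supported cochains using a $G$-invariant triangulation or cell structure. Because the action is free and $G$ is finite, we may choose a $G$-CW structure on $M$ with freely permuted cells, and one that descends to $N$. Then compactly supported cellular cochains are finitely supported functions on cells, and in each degree they form a direct sum of copies of the free module $R[G]$. Over a finite group, free modules are coinduced and hence have vanishing higher cohomology, and tensoring with $d^*\mathrm{sgn}$ preserves freeness. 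Granting this, the second spectral sequence collapses, so $H^*(G;C)\cong H^*((C^*)^G)$.

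It remains to identify $(C^*)^G$ with compactly supported cochains on $N$ valued in $\mathbb{Z}^w$, and we expect this to be the main obstacle. Since cells of $M$ are freely permuted, a $G$-invariant finitely supported cochain with values in $d^*\mathrm{sgn}$ is determined by its values on a set of lifts of the cells of $N$. Compactness is preserved because $G$ is finite, so the preimage of a compact set is compact. Translating a cell $\sigma$ by $g$ multiplies the value by $d(g)=\pm1$, exactly according to whether $g$ preserves or reverses the orientation of $M$. Such cochains are precisely cellular cochains on $N$ with coefficients in the local system $M\times_G d^*\mathrm{sgn}$. Because $M$ is orientable and $N=M/G$, this local system is the orientation local system of $N$: a loop in $N$ lifts to a path from $x$ to $gx$, and it preserves orientation exactly when $d(g)=1$. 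We therefore obtain $(C^*)^G\simeq C_c^*(N;\mathbb{Z}^w)$, which identifies the abutment as $H_c^{s+t}(N;\mathbb{Z}^w)$.

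The delicate points are the existence of a suitable equivariant cell structure, which is standard for smooth free actions of finite groups, and the sign bookkeeping that matches $d^*\mathrm{sgn}$ with $\mathbb{Z}^w$. If one prefers to avoid triangulations, the first step would instead be to check directly that singular compactly supported cochains are cohomologically trivial $G$-modules, using freeness of the action on singular simplices.
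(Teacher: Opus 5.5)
Your proof is correct and has the same skeleton as the paper's. You apply Proposition \ref{prop:general ss} to sign-twisted compactly supported cochains, read off $E_2$ from the first spectral sequence, collapse the second by freeness, and identify the invariants with twisted cochains on $N$; your monodromy computation is exactly the content of Lemma \ref{lem:orientation cover}, $\widetilde N\cong M\times_G C_2$.

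The difference is the chain model. The paper stays with singular cochains throughout:
\begin{itemize}
\item It takes $C=\varinjlim_K C^*(M,M\setminus\bar K;d^*\mathrm{sgn})$ over preimages $\bar K$ of compacta $K\subseteq N$.
\item It observes that each $C_*(M,M\setminus\bar K)$ is $G$-free, so each cochain group is coinduced.
\item It passes to the limit using that $H^*(G;-)$ commutes with direct limits for finite $G$.
\item It matches the invariants with $\Hom_{C_2}(C_*(\widetilde N,\widetilde N\setminus\bar K),\mathrm{sgn})$.
\end{itemize}
This works for an arbitrary topological manifold and needs no cell structure.

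Your cellular model makes the collapse immediate, since finitely supported functions on a free $G$-set of cells form a free module and no limit argument is needed. It also makes the sign bookkeeping transparent. The cost is two inputs:
\begin{itemize}
\item A free $G$-CW structure. You get one by lifting a triangulation of $N$ when $M$ is smooth. That covers $F_k(\mathbb{R}^n)$, but it is not automatic for a general topological manifold.
\item The standard comparison between cellular and singular compactly supported cohomology, both $G$-equivariantly on $M$ and with local coefficients on $N$.
\end{itemize}

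Two small points. First, you define $C$ singularly and then compute with cells. You should either apply Proposition \ref{prop:general ss} directly to the cellular complex, or connect the two complexes by a $G$-equivariant quasi-isomorphism and invoke naturality. Second, your closing fallback is precisely the paper's argument. There, freeness on singular simplices still has to be supplemented by handling the direct limit over compacta, which the paper does by commuting $H^*(G;-)$ past the limit.
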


For the proof, we will require the following simple topological observation.

\begin{lemma}\label{lem:orientation cover}
In the situation of Corollary \ref{cor:cs cartan leray}, there is a canonical isomorphism of covering spaces of the form
\[\widetilde N\cong M\times_G C_2,\] where $\widetilde N$ denotes the orientation double cover of $N$.
\end{lemma}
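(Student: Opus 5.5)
The plan is to build the map $\widetilde N\to M\times_G C_2$ from local orientations and then check that it is an isomorphism of double covers over $N$. First fix an orientation of $M$. The degree homomorphism $d\colon G\to C_2$ then records whether each $g\in G$ preserves or reverses this orientation. Here $C_2=\{\pm1\}$, with $G$ acting on it through $d$. The space $M\times_G C_2$ is the quotient of $M\times C_2$ by $g\cdot(x,\epsilon)=(gx,d(g)\epsilon)$. Since $G$ acts freely on $M$, it acts freely on $M\times C_2$, and the projection $M\times_G C_2\to M/G=N$ is a double cover. It is locally trivial because $M\to N$ is a covering space, $G$ being finite and acting freely.

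Next I would define the comparison map. A point of $\widetilde N$ is a pair $(y,o)$ with $y\in N$ and $o$ a local orientation of $N$ at $y$. Given a pair $(x,\epsilon)$ with $x\in M$ and $\epsilon\in C_2$, the quotient map $q\colon M\to N$ is a local diffeomorphism near $x$. So I would send $(x,\epsilon)$ to the point over $q(x)$ given by $\epsilon$ times the pushforward $q_*(o_x)$ of the chosen orientation $o_x$ of $M$ at $x$.

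The main check is that this map is $G$-invariant. We have $q\circ g=q$, so $q_*$ at $gx$ composed with $g_*$ equals $q_*$ at $x$. By definition of $d$, $g_*(o_x)=d(g)\,o_{gx}$. Hence $q_*(o_{gx})=d(g)\,q_*(o_x)$, and therefore the image of $(gx,d(g)\epsilon)$ is $d(g)\epsilon\cdot d(g)\,q_*(o_x)=\epsilon\, q_*(o_x)$. This is the image of $(x,\epsilon)$, so the map descends to $M\times_G C_2\to\widetilde N$.

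The descended map commutes with the projections to $N$. It is continuous, since on an evenly covered neighborhood it is just pushforward of a continuous orientation. On each fiber it is a bijection: the fiber of $M\times_G C_2$ over $y$ is $\{[x,\pm1]\}$ for any fixed lift $x$, and these two points go to the two opposite orientations at $y$. A map of double covers over $N$ that is bijective on fibers is an isomorphism of covering spaces.

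Finally, I would address canonicity. Replacing $o$ by $-o$ on a component of $M$ changes the map by the automorphism $\epsilon\mapsto-\epsilon$, applied compatibly. If $M$ is connected, one can instead identify $M\times_G C_2$ with the set of pairs (point of $M$, orientation of $M$) modulo $G$, which requires no choice at all. Stating the lemma that way makes the isomorphism canonical. No step here is a genuine obstacle; the only point requiring care is the sign bookkeeping in the invariance check.
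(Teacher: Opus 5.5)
Your proposal is correct and is essentially the paper's argument. The paper uses the orientation to trivialize $\widetilde M\cong M\times C_2$ as $G$-spaces, with $G$ acting on $C_2$ through $d$, and notes that the canonical map $\widetilde M\to\widetilde N$ induces a homeomorphism $\widetilde M/G\cong\widetilde N$. Your map $(x,\epsilon)\mapsto\epsilon\, q_*(o_x)$ is exactly this composite, with the $G$-invariance sign check and the fiberwise bijectivity written out explicitly.
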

\begin{proof}
Since $M$ is oriented, we have $\widetilde M=M\times C_2$ as $G$-spaces over $M$, and the canonical map $\widetilde M\to \widetilde N$ induces a homeomorphism $\widetilde M/G\cong \widetilde N$.
\end{proof}

\begin{proof}[Proof of Corollary \ref{cor:cs cartan leray}]
We follow the strategy of Corollary \ref{cor:cartan leray}, applying Proposition \ref{prop:general ss} instead to the complex $C=\varinjlim_K C^*(M,M\setminus \bar K;d^*\mathrm{sgn})$, where $K$ ranges over compact subsets of $N$, and we write $\bar K$ generically for the preimage of $K$ under whichever projection map is relevant. Since $G$ is finite, the projection to the quotient is proper, so every compact subset of $M$ is contained in one of the form $\bar K$, and the identification of the $E_2$-page of the first spectral sequence follows. On the other hand, since $H^*(G;-)$ commutes with direct limits for $G$ finite, the second spectral sequence has $E_1$-page given by 
{\small\begin{align*}
\varinjlim_KH^*(G; C^*(M,M\setminus \bar K;d^*\mathrm{sgn}))
&\cong \varinjlim_K\Hom_{G}(C_*(M,M\setminus \bar K),d^*\mathrm{sgn})\\
&\simeq \varinjlim_K\Hom_{G\times C_2}(C_*(M\times C_2,(M\setminus \bar K)\times C_2),d^*\mathrm{sgn}\otimes\mathrm{sgn})\\
&\simeq\varinjlim_K\Hom_{ C_2}(C_*(M\times_G C_2, (M\setminus \bar K)\times_G C_2),\mathrm{sgn})\\
&\cong \varinjlim_K\Hom_{ C_2}(C_*(\widetilde N, \widetilde N\setminus \bar K),\mathrm{sgn})
\end{align*}}where the first line follows from the freeness assumption as before, the second and third from basic universal properties and the fact that $C_*(X/G)\simeq C_*(X)_G$ for free $G$-spaces $X$, and the fourth from Lemma \ref{lem:orientation cover}. It follows that the second spectral collapses to $H_c^*(N;\mathbb{Z}^w)$, and the claim follows as before.
\end{proof}

The same method establishes the following purely algebraic result.

\begin{corollary}[Lyndon--Hochschild--Serre]\label{cor:lhs}
Given a normal subgroup $K\unlhd G$ and a $G$-module $M$, there is a spectral sequence of the form
\[
E_2^{s,t}\cong H^s(G/K; H^t(K;M))\implies H^{s+t}(G;M).
\]
\end{corollary}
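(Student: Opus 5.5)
We follow the strategy of Corollary \ref{cor:cartan leray}, now applying Proposition \ref{prop:general ss} to the quotient group $G/K$ and a cochain complex of $G/K$-modules built from a resolution for $G$. Fix a free resolution $W$ of the ground ring over $G$, and set $C=\Hom_K(W,M)$. Since $K$ is normal, $G/K$ acts on $C$ via $(gf)(w)=gf(g^{-1}w)$. This action is well defined on $K$-equivariant maps, because for $k\in K$ the element $gkg^{-1}$ lies in $K$. By construction $H^t(C)=H^t(K;M)$, since $W$ is also a free resolution over $K$: a free $G$-module is free over any subgroup.

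Applying the first spectral sequence of Proposition \ref{prop:general ss} to $G/K$ and $C$ gives
\[E_2^{s,t}\cong H^s(G/K;H^t(K;M))\implies H^{s+t}(G/K;C).\]
It then remains to identify the target with $H^*(G;M)$. For this we use the second spectral sequence, which has $E_1^{s,t}\cong H^t(G/K;C^s)$ with $C^s=\Hom_K(W^s,M)$. Since $W^s$ is a direct sum of copies of $\mathbb{Z}[G]$ (or $R[G]$), the module $C^s$ is a product of copies of $\Hom_K(R[G],M)$. As a $G/K$-module this is coinduced from the trivial subgroup: $\Hom_K(R[G],M)\cong \Hom(R[G/K],M)$ after choosing coset representatives, or canonically $\Hom_K(R[G],M)\cong\mathrm{CoInd}_1^{G/K}(\cdot)$.

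Coinduced modules are acyclic by Shapiro's lemma, and products of acyclic modules are acyclic. So $E_1$ is concentrated on the row $t=0$, where it reads $C^{G/K}=\Hom_K(W,M)^{G/K}=\Hom_G(W,M)$. Its cohomology is $H^*(G;M)$, so the second spectral sequence collapses and the abutment of the first is $H^*(G;M)$, as claimed.

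The main obstacle is the acyclicity step, namely verifying that $\Hom_K(R[G],M)$ is coinduced as a $G/K$-module. This is not literally an injective module, unlike the situation in Corollary \ref{cor:cartan leray}. My first attempt would be the adjunction $\Hom_{G/K}(V,\Hom_K(R[G],M))\cong\Hom_K(V,M)$ for $V$ an $R[G/K]$-module, viewed as a $G$-module and hence restricted to $K$. Taking $V$ to be a free resolution over $G/K$ shows that the higher cohomology vanishes, because a free $G/K$-module has trivial $K$-action and its restriction to $K$ is a sum of trivial modules. Hence one should instead compare directly with cohomology of the trivial group.

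A cleaner alternative avoids this: replace $C$ by the bicomplex $\Hom_G(W'\otimes W,M)$, where $W'$ is a free resolution over $G/K$, with the diagonal action. Then $W'\otimes W$ is a free $G$-resolution, and filtering in the two directions gives respectively $H^*(G;M)$ and the desired $E_2$. I would present this version if the first becomes messy.
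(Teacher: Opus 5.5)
Your main argument is the paper's proof: the same complex $C=\Hom_K(W,M)$ of $G/K$-modules, the same reading of the first spectral sequence, and the same collapse of the second spectral sequence because $\Hom_K(R[G],M)$ is coinduced from the trivial subgroup, hence acyclic by Shapiro's lemma (this is more accurate than the paper's ``hence injective''). One correction to your aside: the adjunction should read $\Hom_{G/K}(V,\Hom_K(R[G],M))\cong\Hom_R(V,M)$, not $\Hom_K(V,M)$, since the latter would give $H^0=M^K$ instead of $M$; higher vanishing holds either way and Shapiro already covers it, and your fallback bicomplex $\Hom_G(W'\otimes W,M)$ is in fact isomorphic to the paper's $\Hom_{G/K}(W',\Hom_K(W,M))$.
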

\begin{proof}
We apply Proposition \ref{prop:general ss} to the complex $C=\Hom_K(W,M)$ of $G/K$-modules, where $W$ is as above. Since $W$ is equally a free resolution of the ground ring as an $K$-module, the $E_2$-page of the first spectral sequence is as desired. On the other hand, the isomorphism
\[\Hom_K(R[G],M)\cong \prod_{[g]\in G/K}\Hom_K(g\cdot R[K],M)\] implies that this $G/K$-module is coinduced, hence injective, so the first page of the second spectral sequence is concentrated on the $t=0$ row, which takes the form
\[E_1^{*,0}\cong \Hom_K(W,M)^{G/K}\cong \Hom_G(W,M).\] Thus, the second spectral sequence collapses to $H^*(G;M)$, implying the claim as before.
\end{proof}

It is clear that the construction of Proposition \ref{prop:general ss} is natural with respect to $G$-equivariant chain maps. We close this section with an observation concerning the naturality of this type of spectral sequence with respect to the group.

\begin{proposition}\label{prop:naturality}
Given a subgroup $K\leq G$ of finite index and a complex $C$ of $G$-modules, there are maps of spectral sequences of the form
\[\xymatrix{
H^s(K;H^t(C))\ar@{=>}[d]\ar[r]^-{\mathrm{cor}_K^G}&H^s(G;H^t(C))\ar@{=>}[d]\ar[r]^-{\mathrm{res}_K^G}&H^s(K;H^t(C))\ar@{=>}[d]\\
H^{s+t}(K;C)\ar[r]^-{\mathrm{cor}_K^G}&H^{s+t}(G;C)\ar[r]^-{\mathrm{res}_K^G}&H^{s+t}(K;C)
}\]
\end{proposition}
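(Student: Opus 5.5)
The plan is to realize restriction and corestriction at the level of the bicomplex $\Hom_G(W,C)$ from Proposition \ref{prop:general ss}, as maps of bicomplexes, i.e.\ maps preserving both gradings and commuting with both differentials. Once this is done, functoriality of the spectral sequence of a bicomplex (filtered by the $W$-degree $s$) produces maps of spectral sequences. It then remains to identify these maps on the $E_2$-page and on the abutment with the classical restriction and corestriction.

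The first step is to fix a free resolution $W$ of the ground ring over $G$. Since $R[G]$ is a free $R[K]$-module, $W$ is also a free resolution over $K$. Hence we may compute both $H^*(K;-)$ and $H^*(G;-)$ from the same $W$, so that $H^*(K;C)$ is the cohomology of $\Hom_K(W,C)$.

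The second step defines the two maps. Restriction is the inclusion
\[\Hom_G(W,C)\subseteq \Hom_K(W,C),\]
which visibly commutes with both differentials and preserves the bigrading. For corestriction, choose coset representatives $g_1,\dots,g_m$ for $G/K$, which is possible since the index is finite. Then define the transfer
\[\mathrm{tr}(f)=\sum_i g_i f g_i^{-1}: \Hom_K(W,C)\to\Hom_G(W,C).\]
One checks that $\mathrm{tr}$ is independent of the choice of representatives and lands in $G$-equivariant maps. It commutes with both differentials because those differentials are $G$-equivariant, and it preserves bidegree. Both maps therefore preserve the column filtration and induce maps of spectral sequences.

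The third step identifies the maps on each page. On $E_1$, the spectral sequence of Proposition \ref{prop:general ss} has $E_1^{*,t}\cong\Hom_G(W,H^t(C))$, using exactness of $\Hom_G(W^s,-)$, and similarly over $K$. The induced maps are the same formulas, namely inclusion and $\sum_i g_i(-)g_i^{-1}$, now applied to $H^t(C)$. Passing to cohomology in $W$ gives the standard cochain-level descriptions of $\mathrm{res}_K^G$ and $\mathrm{cor}_K^G$ on $H^s(-;H^t(C))$, which is what the diagram requires on $E_2$. On the abutment, the same cochain formulas applied to the total complex give $\mathrm{res}$ and $\mathrm{cor}$ on the hypercohomology $H^{s+t}(-;C)$. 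These maps are compatible with the filtrations, so the induced maps on $E_\infty$ are the associated graded of these maps.

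The only real obstacle is the identification on $E_2$. We must know that the transfer formula on $\Hom_K(W,M)$, for any free $G$-resolution $W$, computes the corestriction as defined, for instance, in \cite{Brown:CG}. I would handle this by citing that description of $\mathrm{cor}$ in degree zero, namely the norm map $M^K\to M^G$. I would then invoke uniqueness of extension to a morphism of $\delta$-functors, since both sides are effaceable by coinduced modules. Everything else is formal naturality of bicomplex spectral sequences.
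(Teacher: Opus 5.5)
Your proof is correct, but it is organized differently from the paper's. The paper never writes down cochain formulas. Its key step is Lemma~\ref{lem:shapiro ss}: the isomorphism $\Hom_G(W,\mathrm{Coind}_K^G C)\cong\Hom_K(W,C)$ induces an isomorphism of spectral sequences that realizes Shapiro's lemma on $E_2$ and on the abutment. Restriction is then this isomorphism composed with the map induced by the unit $C\to\mathrm{Coind}_K^G\mathrm{Res}_K^G C$. Corestriction uses the counit $\mathrm{Ind}_K^G\mathrm{Res}_K^G C\to C$ together with $\mathrm{Ind}_K^G\cong\mathrm{Coind}_K^G$ for finite index. So everything reduces to the evident naturality of Proposition~\ref{prop:general ss} in the coefficient complex.

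If you unwind these adjunctions on $\Hom_K(W,C)$, you get exactly your inclusion $\Hom_G(W,C)\subseteq\Hom_K(W,C)$ and your transfer $f\mapsto\sum_i g_ifg_i^{-1}$. The two arguments therefore build the same bicomplex maps.

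What the paper's packaging buys is that the identification on $E_2$ and on the abutment is tautological, given the Shapiro-plus-(co)unit definition of $\mathrm{res}$ and $\mathrm{cor}$. Yours is more concrete and bypasses the Shapiro lemma for spectral sequences, at the cost of needing the cochain-level description of corestriction. That step is not a real obstacle. It is standard that the transfer $\Hom_K(F,M)\to\Hom_G(F,M)$ computes $\mathrm{cor}$ for any projective $G$-resolution $F$ (see \cite[III.9]{Brown:CG}). Your $\delta$-functor uniqueness argument also works, because coinduced $G$-modules remain acyclic after restriction to $K$.
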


The key observation in the proof is that our spectral sequences are compatible with Shapiro's lemma.

\begin{lemma}\label{lem:shapiro ss}
Given a subgroup $K\leq G$ of finite index and a complex $C$ of $K$-modules, there is an isomorphism of spectral sequences of the form
\[
\xymatrix{
H^s(K;H^t(C))\ar@{=>}[d]\ar[r]^-{\cong}&H^s(G;\mathrm{Coind}_K^G\,H^t(C))\ar@{=>}[d]\\
H^{s+t}(K;C)\ar[r]^-{\cong}&H^{s+t}(G;\mathrm{Coind}_K^G\,C),
}
\] where the indicated isomorphisms on $E_2$ and $E_\infty$ are those of Shapiro's lemma.
\end{lemma}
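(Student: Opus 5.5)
The plan is to realize Shapiro's lemma at the level of the defining bicomplexes, so that the two spectral sequences of Proposition \ref{prop:general ss} become literally isomorphic. Fix a free resolution $W$ of the ground ring as a $G$-module. Restricted to $K$, it is still a free resolution, since $R[G]$ is free over $R[K]$. Hence $H^*(K;C)$ may be computed from the bicomplex $\Hom_K(W,C)$. Meanwhile $H^*(G;\mathrm{Coind}_K^G C)$ is computed from $\Hom_G(W,\mathrm{Coind}_K^G C)$, where $\mathrm{Coind}_K^G C=\Hom_K(R[G],C)$ is applied degreewise.

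The key step is the adjunction isomorphism
\[\Hom_G(W^s,\Hom_K(R[G],C^t))\cong \Hom_K(W^s,C^t),\qquad f\mapsto (w\mapsto f(w)(1)).\]
This is natural in both variables, so it commutes with the differential from $W$ and with the differential from $C$. It therefore gives an isomorphism of bicomplexes. It also respects the filtration by $s$ used to build the first spectral sequence, so it induces an isomorphism of spectral sequences from every page on, including the abutments. On the abutment, the map is by definition the chain-level Shapiro isomorphism.

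It remains to check the $E_2$-page. Taking cohomology in the $C$-direction first, we use that $\mathrm{Coind}_K^G=\Hom_K(R[G],-)$ is exact, since $R[G]$ is free, hence projective, over $R[K]$. This gives $H^t(\mathrm{Coind}_K^G C)\cong \mathrm{Coind}_K^G H^t(C)$. The $E_1$-isomorphism is then the same adjunction, now applied to the graded module $H^t(C)$:
\[\Hom_G(W,\mathrm{Coind}_K^G H^t(C))\cong \Hom_K(W,H^t(C)).\]
Passing to cohomology gives exactly Shapiro's isomorphism on $E_2$.

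I expect the main obstacle to be bookkeeping rather than mathematics: confirming that the isomorphism of bicomplexes induces the Shapiro map on $E_2$ and on the abutment, and not some other isomorphism. The finite index hypothesis is not needed for this lemma. It matters only for identifying $\mathrm{Coind}$ with $\mathrm{Ind}$ when deducing Proposition \ref{prop:naturality}. There, restriction and corestriction come from the unit $C\to \mathrm{Coind}_K^G C$ and the counit $\mathrm{Coind}_K^G C\cong\mathrm{Ind}_K^G C\to C$, each composed with Lemma \ref{lem:shapiro ss}.
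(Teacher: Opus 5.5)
Your argument is correct and is essentially the paper's proof. Both build the adjunction isomorphism of bicomplexes $\Hom_G(W,\mathrm{Coind}_K^G C)\cong\Hom_K(W,C)$, using that $W$ remains free over $K$, and both use the fact that $H^t$ commutes with coinduction to identify the $E_2$-page.

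Your remark that the finite index hypothesis is not needed here is also right. $\mathrm{Coind}_K^G=\Hom_K(R[G],-)$ is exact because $R[G]$ is free over $R[K]$, whereas the paper attributes this commutation to the assumption on $[G:K]$.
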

\begin{proof}
The two spectral sequences arise by applying Proposition \ref{prop:general ss} to $(K, C)$ and $(G, \mathrm{Coind}_K^G\,C)$, respectively. Since the functor $H^t$ commutes with coinduction by our assumption on $[G:K]$, the righthand spectral sequence has the claimed $E_2$-page. The isomorphism between the two is induced by the isomorphism $\Hom_G(W, \mathrm{Coind}_K^G\, C)\cong \Hom_K(W,C)$, where we use that $W$ is also free over $K$.
\end{proof}

\begin{proof}[Proof of Proposition \ref{prop:naturality}]
The righthand map of spectral sequences is obtained by composing the isomorphism of Lemma \ref{lem:shapiro ss} with the map induced by the unit map $C\to \mathrm{Coind}_K^G\mathrm{Res}_K^G\,C$. The lefthand map is obtained in a similar manner, instead using the counit $\mathrm{Ind}_K^G\mathrm{Res}_K^G\,C\to C$ after passing through the isomorphism between induction and coinduction.
\end{proof}

\subsection{Spectral subsequences} As our main result concerns identifying summands in a spectral sequence, we will require criteria for identifying such summands. The purpose of this section is to develop such criteria. We begin by articulating a reasonable notion of a subobject in this context.

\begin{definition}
Let $E$ be a spectral sequence. A \emph{spectral subsequence} of $E$ is a collection $E'=\{E'_r\}$ such that
\begin{enumerate}
\item $E_r'$ is a subcomplex of $E_r$, and
\item the induced map $H^*(E_r')\to H^*(E_r)\cong E_{r+1}$ is an isomorphism onto $E_{r+1}'$.
\end{enumerate}
\end{definition}

In this situation, $E'$ forms a spectral sequence, and the inclusions form a map of spectral sequences $E'\to E$.

\begin{definition}
We say that a submodule of $E_2$ \emph{spans a spectral subsequence} if it is the second page of some spectral subsequence of $E$.
\end{definition}

It is easy to see that a submodule of $E_2$ spans at most one spectral subsequence, but of course it may not span any. In order to characterize which submodules do span spectral subsequences, we require the following concept.

\begin{definition}
Let $C$ be a cochain complex with differential $d$. We call a subcomplex $A$ \emph{independent} if $dx \in A$ if and only if $dx = dy$ for some $y \in A$.
\end{definition}

An immediate result is that the subcomplex $A$ is independent if and only if the induced homomorphism $H^*(A) \rightarrow H^*(C)$ is injective.

\begin{definition}\label{spectralsub}
Let $\{E_r\}$ be a spectral sequence and $E'_2$ a submodule of $E_2$. We define what it means for $E'_2$ to be \emph{independent through $E_r$} recursively as follows:
\begin{enumerate}
\item First, we say that $E_2'$ is \emph{independent through $E_2$} if $E'_2$ is an independent subcomplex of $E_2$. In this case, we define $E'_3 := H^*(E'_2)$, thought of as a submodule of $E_3$.
\item Assume that the term \emph{independent through $E_{r-1}$} and the subspaces $E_r'\subseteq E_r$ have been defined. We say that $E'_2$ is \emph{independent through $E_r$} if $E_r'$ is an independent subcomplex of $E_r$.
\end{enumerate}
\end{definition}

We now relate this inductive notion of independence to our earlier concept of a spectral subsequence.

\begin{lemma}\label{lem:spanning criterion}
A submodule of $E_2$ spans a spectral subsequence if and only if it is independent through $E_r$ for every $r\geq2$.
\end{lemma}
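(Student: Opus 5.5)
The plan is to prove both directions by unwinding the definitions, with an induction on $r$ doing the work. The key fact is the remark following the definition of independence: a subcomplex $A\subseteq C$ is independent if and only if $H^*(A)\to H^*(C)$ is injective. I would first record this carefully. The kernel of $H^*(A)\to H^*(C)$ consists of classes of cocycles $a\in A$ with $a=dx$ for some $x\in C$. Independence says that any such $dx$ lying in $A$ equals $dy$ for some $y\in A$, so the class vanishes in $H^*(A)$. The converse is the same computation read backwards.

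For the forward direction, suppose $E_2'$ is the second page of a spectral subsequence $E'=\{E_r'\}$. I would show by induction on $r$ that $E_2'$ is independent through $E_r$ and that the recursively defined submodule of $E_r$ from Definition \ref{spectralsub} coincides with the given $E_r'$.
\begin{itemize}
\item At $r=2$, condition (2) of a spectral subsequence says that $H^*(E_2')\to E_3$ is an isomorphism onto $E_3'$. In particular it is injective, so $E_2'$ is an independent subcomplex of $E_2$. The recursive $E_3'$ is by definition the image of $H^*(E_2')$, which is the given $E_3'$.
\item At the inductive step, the recursive $E_r'$ agrees with the given one. Since $H^*(E_r')\to E_{r+1}$ is injective by condition (2), $E_r'$ is independent in $E_r$. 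Its image in $E_{r+1}$ is the given $E_{r+1}'$.
\end{itemize}

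For the converse, suppose $E_2'$ is independent through $E_r$ for every $r\ge 2$. The recursion then produces submodules $E_r'\subseteq E_r$ for all $r$, each an independent subcomplex. I would verify the two conditions of a spectral subsequence directly.
\begin{itemize}
\item Condition (1) holds because being a subcomplex is part of the definition of independence through $E_r$.
\item For condition (2), the map $H^*(E_r')\to H^*(E_r)\cong E_{r+1}$ is injective by the remark. Its image is, by the definition of $E_{r+1}'$, exactly $E_{r+1}'$, so it is an isomorphism onto $E_{r+1}'$.
\end{itemize}
Hence $\{E_r'\}$ is a spectral subsequence with second page $E_2'$. This also gives the claimed uniqueness: the recursion determines each $E_r'$ from $E_2'$.

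There is no serious obstacle; the work is mostly bookkeeping. The one point needing care is the identification, in the inductive step, of the recursively defined $E_r'$ with the page of a given subsequence. The recursion tacitly requires $E_r'$ to be a subcomplex before independence can even be asked. In the forward direction this is supplied by the hypothesis. In the reverse direction it is supplied by the hypothesis of independence through $E_r$, which includes being a subcomplex.
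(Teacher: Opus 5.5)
Your proposal is correct and follows the same route as the paper: both directions are a direct unwinding of the definitions. The forward direction applies condition (2) inductively to get injectivity of $H^*(E_r')\to E_{r+1}$, and the reverse direction reads off both conditions from the recursion. You add detail the paper leaves implicit, namely the verification that independence is equivalent to injectivity on cohomology and the identification of the recursively defined pages with the given ones, but the argument is the same.
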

\begin{proof}
If $E_2'\subseteq E_2$ is independent through $E_r$ for every $r\geq2$, then $E'=\{E_r'\}$ is a spectral subsequence of $E$. Conversely, if $E_2'$ spans a spectral subsequence $E'$, then $E_2'$ is in particular a subcomplex and the map $H^*(E_2')\to H^*(E_2)\cong E_3$ is an isomorphism onto $E_3'$; in particular, this map is injective, so $E_2'$ is independent. Applying the same reasoning inductively yields the desired conclusion.
\end{proof}

We come now to the main result of the section, which we will use below in identifying summands in the Cartan--Leray spectral sequence.

\begin{proposition}\label{prop:projection extends}
Let $E$ be a spectral sequence and $E_2\cong V\oplus W$ a direct sum decomposition. The following conditions are equivalent.
\begin{enumerate}
\item The projection onto $V$ extends to a map $\pi:E\to E$ of spectral sequences.
\item There is a map $\rho:E\to E$ of spectral sequences such that $\rho|_{V}$ is an isomorphism onto $V$ and $\rho|_W=0$.
\item The projection onto $W$ extends to a map $\pi:E\to E$ of spectral sequences.
\item There is a map $\rho:E\to E$ of spectral sequences such that $\rho|_{W}$ is an isomorphism onto $W$ and $\rho|_V=0$.
\item The subspaces $V$ and $W$ span spectral subsequences $E'$ and $E''$.
\end{enumerate}
Moreover, in this situation, we have $E\cong E'\oplus E''$.
\end{proposition}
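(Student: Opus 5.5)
The plan is to prove the cycle (1)$\Rightarrow$(2), (3)$\Rightarrow$(4) (both trivial), then (2)$\Rightarrow$(5), (4)$\Rightarrow$(5), and finally (5)$\Rightarrow$(1) and (5)$\Rightarrow$(3). The splitting $E\cong E'\oplus E''$ will fall out of the last step.

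For (2)$\Rightarrow$(5), which is the substantive direction, fix $\rho$ as in (2). Each $\rho_r$ is a chain map on $E_r$, and $\rho_{r+1}=H^*(\rho_r)$. Set $V_2=V$, $W_2=W$. I will show inductively that $E_r=V_r\oplus W_r$ with both summands subcomplexes, $\rho_r|_{V_r}$ an automorphism of $V_r$, and $\rho_r|_{W_r}=0$.

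The key observation is that such a pair is recovered canonically from $\rho_r$. Since $\rho_r$ restricts to zero on $W_r$ and to an automorphism on $V_r$, we get $W_r=\ker\rho_r$ and $V_r=\operatorname{im}\rho_r$. Both are subcomplexes because $\rho_r$ commutes with $d_r$. Then $H^*(E_r)=H^*(V_r)\oplus H^*(W_r)$, and each summand maps isomorphically into $E_{r+1}$, so $V_r$ and $W_r$ are independent in the sense defined above. We define $V_{r+1}=H^*(V_r)$ and $W_{r+1}=H^*(W_r)$. Because $\rho_{r+1}=H^*(\rho_r)$, it is an automorphism on $H^*(V_r)$ (homology of an automorphism of complexes) and zero on $H^*(W_r)$, which completes the induction. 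Lemma \ref{lem:spanning criterion} then gives (5). By symmetry, the same argument gives (4)$\Rightarrow$(5).

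For (5)$\Rightarrow$(1) and (3), suppose $V$ and $W$ span $E'$ and $E''$. I first claim that $E_r=E'_r\oplus E''_r$ as complexes for all $r$. This holds at $r=2$. If it holds at $r$, then $H^*(E_r)=H^*(E'_r)\oplus H^*(E''_r)$, and by the definition of spectral subsequence these summands map isomorphically onto $E'_{r+1}$ and $E''_{r+1}$, giving the decomposition at $r+1$. The projections $\pi_r:E_r\to E'_r\subseteq E_r$ are then chain maps, and $H^*(\pi_r)$ is the projection onto $E'_{r+1}$, namely $\pi_{r+1}$. So $\pi$ is a map of spectral sequences extending the projection onto $V$, proving (1). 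Similarly we get (3).

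The same pagewise decomposition is exactly the statement that the inclusions $E'\oplus E''\to E$ form an isomorphism of spectral sequences, which gives the final claim. The main obstacle is the induction in (2)$\Rightarrow$(5): one must check that the decomposition propagates to later pages with $\rho_r$ still of the required form. This is handled by characterizing the summands as $\ker\rho_r$ and $\operatorname{im}\rho_r$, which makes them automatically subcomplexes. If the paper's spectral sequences have infinitely many pages or an $E_\infty$ term, no further argument is needed, since the definitions concern only the $E_r$ for finite $r$.
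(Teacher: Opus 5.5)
Your proof is correct and follows essentially the paper's route: the substantive step (2)$\Rightarrow$(5) uses the same observation, namely that $W_r=\ker\rho_r$ and $V_r$ is the image of $\rho_r$, so both are subcomplexes and the splitting and the form of $\rho$ pass to the next page. The only differences are cosmetic: you obtain $E\cong E'\oplus E''$ by a direct page-by-page induction instead of citing Zeeman's comparison theorem, and you reach (3) from (5) directly instead of using the map $1-\pi$.
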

\begin{proof}
The first condition implies the third, since $1-\pi$ is a map of spectral sequences extending the projection onto $W$, and the third implies the first by symmetry. On the other hand, assuming the fifth condition, the resulting map of spectral sequences $E'\oplus E''\to E$ is an isomorphism on $E_2$ by assumption, hence an isomorphism by Zeeman's theorem, and the composite $E\cong E'\oplus E''\to E'\subseteq E$ is a map of spectral sequences extending the projection onto $V$ (resp. $E''$, $W$). Thus, the fifth condition implies the first. Since the first obviously implies the second and the third the fourth, it suffices by symmetry to show that the second implies the fifth.

Assuming the second condition, we have $d_2(\rho(v))=\rho(d_2(v))\in V$ for any $v\in V$, so $V$ is a subcomplex. On the other hand, for $w\in W$, we have $\rho(d_2(w))=d_2(\rho(w))=0$, so $d_2(w)\in W$, and $W$ is also a subcomplex. It follows that both $V$ and $W$ are independent, and $E_2\cong V\oplus W$ as complexes, so $E_3\cong H^*(V)\oplus H^*(W)$. Applying the same reasoning inductively shows that both $V$ and $W$ are independent through $E_r$ for every $r\geq0$, hence span spectral subsequences by Lemma \ref{lem:spanning criterion}.
\end{proof}

\section{Cohomology of configuration spaces}

We turn to our main subject of interest, namely the cohomology of the spaces $F_k=F_k(\mathbb{R}^n)$ and $B_k=B_k(\mathbb{R}^n)$ for fixed $n>1$, building on the foundational work of Arnold \cite{Arnold:CRCBG} and Cohen \cite[III]{CohenLadaMay:HILS}, which Section \ref{section:arnold-cohen} below reviews in a form suitable for our purposes. For varying $k$, these spaces are interrelated by a host of maps, which we may divide into two categories: the 
\emph{maps in}, induced by embeddings, and the \emph{maps out}, induced by the coordinate projections. These categories of map have each received extensive study, the key words being \emph{operad} in the former case \cite{May:GILS} and \emph{twisted commutative algebra} or \emph{FI-module} in the latter \cite{ChurchEllenbergFarb:FIMSRSG}. As it will be necessary for us to mix the two types of map, we devote Section \ref{section:maps in maps out} to the study of their interactions, the main result being Proposition \ref{prop:endomorphism constant}. 

\subsection{Compositions and partitions}

We begin by detailing some necessary conventions around the combinatorics governing the maps of interest to us.

\begin{definition}
Let $I$ be a finite set. A \emph{set composition} of $I$ of \emph{length $\ell$} is a surjection $\pi:I \to \langle \ell\rangle$. The \emph{blocks} of the set composition $\pi$ are the subsets $\pi_i:=\pi^{-1}(i)\subseteq I$ for $1\leq i\leq \ell$.
\end{definition}

Equivalently, we may view a set composition as a tuple of pairwise disjoint, non-empty subsets whose union is $I$, or as an equivalence relation on $I$. We will pass freely between these perspectives.

In the case $I=\langle k\rangle$ of greatest interest, we speak of a set composition of $k$. This concept gives rise to the three further concepts of a \emph{composition} of $k$, a \emph{set partition} of $k$, and a \emph{partition} of $k$. The two rules for relating these concepts are that dropping the word ``set'' indicates passing to orbits for the right action of $\Sigma_k$ on the set of surjections, while substituting ``partition'' for ``composition'' indicates passing to orbits for the left action of $\Sigma_\ell$. 

Each of these concepts has an attendant concept of length and block, which are inherited in a straightforward manner from the originals, the difference being that the blocks of a (set) partition are unordered, and the blocks of a composition or partition are positive numbers summing to $k$ rather than subsets of $\langle k\rangle$. We use the conventional notation $\lambda\vdash k$ to indicate that $\lambda$ is a partition of $k$. 

Writing $\widetilde \Pi_{\langle k\rangle}$ for the set of all set compositions of $k$, the relationships among these concepts are summarized in the commuting diagram
\[\xymatrix{
\widetilde \Pi_{\langle k\rangle}\ar[r]\ar[d]&\Pi_{\langle k\rangle}\ar[d]\\
\widetilde \Pi_{ k}\ar[r]& \Pi_{ k}
}\] of canonical projections. In this notation, dropping the brackets indicates dropping the word ``set,'' and dropping the tilde indicates substituting ``partition'' for ``composition.'' The bottom and lefthand arrows in this diagram each admit a canonical section, the first by ordering the blocks of a partition in non-increasing order, the second by requiring the ordering on blocks to respect the natural ordering on $\langle k\rangle$. By composition, it follows that the righthand arrow also admits a canonical section. In what follows, our notation often fails to distinguish an element of one of these sets from its image under one of the various functions among them. Thus, for example, we will often use the same symbol $\lambda$ in different contexts to denote a partition and the corresponding set composition. 

Under the action of $\Sigma_k$ on the top row of the above diagram, we write $P_\pi$ for the stabilizer of the set composition $\pi$ and $W_\alpha$ for the stabilizer of the set partition $\alpha$. Following the abusive convention above, given a partition $\lambda\vdash k$, we write $P_\lambda$ and $W_\lambda$ for the stabilizers of the corresponding set composition and set partition, respectively. Our notation is mnemonic, as $P_\lambda$ is a product and $W_\lambda$ a wreath product of symmetric groups.\footnote{The subgroup $P_\lambda$ is otherwise known as the Young subgroup associated to $\lambda$.} Note that, since $W_\lambda$ permutes the blocks of the set composition corresponding to $\lambda$, there results a canonical action of $W_\lambda$ on $\langle \ell(\lambda)\rangle$.

\begin{example}
The tuples $\pi_1=(\{1,2\}, \{3,4\})$ and $\pi_2=(\{1,3\}, \{2,4\})$ are distinct set compositions of $4$, which have distinct underlying set partitions $\rho_1=\left\{\left\{1,2\right\}, \left\{3,4\right\}\right\}$ and $\rho_2=\left\{\left\{1,3\right\}, \left\{2,4\right\}\right\}$, respectively. Both have underlying composition $\eta=(2,2)$ and partition $\lambda=\{2,2\}$ (a multiset). As for the canonical sections, the composition associated to $\lambda$ is $\eta$, and the set composition associated to $\eta$ is $\pi_1$. The subgroup $P_{\pi_1}$ is the image in $\Sigma_4$ of the standard inclusion of $\Sigma_2\times\Sigma_2$, while $P_{\pi_2}$ is the conjugate of this subgroup by the transposition interchanging $2$ and $3$ (resp. $W_{\pi_1}$, $\Sigma_2\wr \Sigma_2$, $W_{\pi_2}$).
\end{example}

\subsection{The Arnold--Cohen theorem}\label{section:arnold-cohen} In this section and the next, cohomology is taken with integer coefficients. As indicated above, we fix $n>1$ and make the abbreviation $F_k=F_k(\mathbb{R}^n)$.

To begin, we define a cohomology class $\alpha_{ab}\in H^{n-1}(F_k)$ by pulling back the canonical generator of $H^{n-1}(S^{n-1})$ via the Gauss map 
\[\gamma_{ab}(x_1,\ldots, x_k)=\frac{x_b-x_a}{|x_b-x_a|},\] where $1\leq a\neq b\leq k$. As the following result attests, these classes are the fundamental building block in the cohomology of Euclidean configuration spaces. Although this calculation is classical, it will be convenient to indicate some aspects of its proof for later use, but we will be brief. The original references are \cite{Arnold:CRCBG} in the case $n=2$ and \cite[III.7]{CohenLadaMay:HILS} in general, and expository accounts can be found in \cite{Sinha:HLDO, Knudsen:CSAT}, for example.

\begin{theorem}[Arnold--Cohen]\label{thm:arnold-cohen}
The graded commutative ring $H^*(F_k)$ is generated by the classes $\alpha_{ab}$ subject only to the following relations:
\begin{enumerate}
\item $\alpha_{ab}^2=0$
\item $\alpha_{ab}=(-1)^n\alpha_{ba}$
\item $\alpha_{ab}\alpha_{bc}+\alpha_{bc}\alpha_{ca}+\alpha_{ca}\alpha_{ab}=0$.
\end{enumerate}
\end{theorem}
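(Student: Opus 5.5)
We prove Theorem \ref{thm:arnold-cohen} by induction on $k$ using the Fadell--Neuwirth fibration. The coordinate projection $F_k\to F_{k-1}$, forgetting the last point, is a locally trivial fibration whose fiber over $(x_1,\ldots,x_{k-1})$ is $\mathbb{R}^n\setminus\{x_1,\ldots,x_{k-1}\}$. This fiber is homotopy equivalent to a wedge of $k-1$ spheres $S^{n-1}$. Its cohomology is free abelian on classes in degree $n-1$, and these are exactly the restrictions of the classes $\alpha_{ak}$ for $1\leq a\leq k-1$: the Gauss map $\gamma_{ak}$ restricted to the fiber has degree one on a small sphere around $x_a$ and degree zero around the other punctures. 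Since these restrictions are global classes, the Leray--Hirsch theorem applies. So $H^*(F_k)$ is a free $H^*(F_{k-1})$-module with basis $1,\alpha_{1k},\ldots,\alpha_{k-1,k}$. Consequently $H^*(F_k)$ is free abelian, and iterating gives the Poincar\'e series $\prod_{j=1}^{k-1}(1+jt^{n-1})$, with an additive basis of monomials $\alpha_{a_2 2}\alpha_{a_3 3}\cdots\alpha_{a_j j}$, where each $a_j<j$ and each factor is either present or absent.

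Next we verify that the three relations hold. Relation (1) holds because $\alpha_{ab}^2$ is pulled back from $H^{2n-2}(S^{n-1})=0$. Relation (2) holds because $\gamma_{ba}=\iota\circ\gamma_{ab}$ with $\iota$ the antipodal map, which has degree $(-1)^n$. For relation (3), all three classes are pulled back along the projection $F_k\to F_3$ to the points $a,b,c$, so it suffices to treat $k=3$. There $H^{2n-2}(F_3)$ is free of rank $2$ by the computation above. I would check the identity by evaluating on explicit cycles. Alternatively, use the standard argument that the map $(\gamma_{ab},\gamma_{bc},\gamma_{ca})$ lands in the complement of a configuration of triples of unit vectors with zero sum, so the triple product relation follows from a degree or codimension count. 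This three-term relation is the one genuinely geometric step and is the main obstacle. I would try the method in \cite{Sinha:HLDO}: compute the products by intersecting Poincar\'e dual submanifolds such as $\{x_b-x_a\in\mathbb{R}_{>0}v\}$.

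Finally we show the relations are complete. Let $A_k$ be the graded commutative ring presented by these generators and relations, with the canonical map $A_k\to H^*(F_k)$. This map is surjective, because the Leray--Hirsch basis consists of products of the $\alpha_{ab}$. For injectivity, we show that $A_k$ is spanned by the monomials $\alpha_{a_2 2}\cdots\alpha_{a_j j}$ with $a_j<j$ and at most one factor for each second index. Relation (2) lets us orient each factor so its larger index comes second. Relation (1), together with relation (3) rewritten as $\alpha_{ac}\alpha_{bc}=\pm\alpha_{ab}\alpha_{bc}\pm\alpha_{ab}\alpha_{ac}$ for $a<b<c$, lets us remove repeated top indices. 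This reduction strictly lowers the multiset of top indices in a suitable order, so it terminates. The spanning set therefore has rank at most that of the free abelian group $H^*(F_k)$, and it surjects onto it. A surjection from a module spanned by $N$ elements onto a free module of rank $N$ is an isomorphism, so $A_k\cong H^*(F_k)$.
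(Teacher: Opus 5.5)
Your fibration and Leray--Hirsch argument, the checks of relations (1) and (2), the reduction of (3) to $k=3$, and the completeness argument are all sound. The genuine gap is relation (3) itself. You identify it as the crux, but you only list strategies you would try. As a result, the canonical map $A_k\to H^*(F_k)$ is never shown to be well defined, and everything after it depends on that map. The alternative you sketch is also misstated. The image of $(\gamma_{ab},\gamma_{bc},\gamma_{ca})$ does \emph{not} avoid triples of unit vectors with zero sum: an equilateral triangle $x_a,x_b,x_c$ produces exactly such a triple. What is true is that $|x_b-x_a|\gamma_{ab}+|x_c-x_b|\gamma_{bc}+|x_a-x_c|\gamma_{ca}=0$ with positive coefficients. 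So the image misses the small diagonal $\{u=v=w\}\subseteq(S^{n-1})^3$, and pulling back its Poincar\'e dual class (which restricts to zero on the complement) gives the relation. Even then, the signs would still need careful bookkeeping.

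The paper closes this step with no geometry, using only symmetry and the rank computation you already have. In $F_3$, set $x=\alpha_{12}\alpha_{23}$, $y=\alpha_{23}\alpha_{31}$, $z=\alpha_{31}\alpha_{12}$. A cyclic relabeling of the points, in the appropriate direction, is a ring automorphism sending $x\mapsto y\mapsto z\mapsto x$, with no signs. Your Leray--Hirsch basis of $H^{2(n-1)}(F_3)\cong\mathbb{Z}^2$ is $\{\alpha_{12}\alpha_{23},\alpha_{12}\alpha_{13}\}$, and $z=-\alpha_{12}\alpha_{13}$. So $\{x,z\}$ is a basis, and $y=c_1x+c_2z$ for unique integers $c_1,c_2$. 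Every integral relation among $x,y,z$ is therefore a multiple of $c_1x-y+c_2z=0$. Applying the automorphism gives the relation $c_2x+c_1y-z=0$, so $(c_2,c_1,-1)=m(c_1,-1,c_2)$ for some integer $m$. Comparing coordinates gives $m^3=1$, hence $m=1$ and $c_1=c_2=-1$, which is exactly $x+y+z=0$.

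With this step filled in, your proof is complete. Its only other difference from the paper is in the completeness step. You show that $A_k$ is spanned by the Leray--Hirsch monomials, which are already a basis of $H^*(F_k)$. The paper instead uses Sinha's chain monomials $\alpha_{a_1a_2}\alpha_{a_2a_3}\cdots$, which it must show span both sides and then count. The paper's choice is adapted to the decomposition into the $M_\lambda$ that it exploits later.
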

\begin{proof}
According to \cite{FadellNeuwirth:CS}, the map $F_k\to F_{k-1}$ is a fiber bundle with fiber over $(x_1,\ldots, x_{k-1})$ homeomorphic to $\mathbb{R}^n\setminus\{x_1,\ldots, x_{k-1}\}$. Fixing $k$ and writing $\kappa_{j}$ for the fundamental class of the submanifold of $F_k$ in which the $j$th particle lies at fixed distance from the $k$th, and all other particles are stationary at some farther distance, we have $\alpha_{ik}(\kappa_j)=\delta_{ij}$. Thus, the Leray--Hirsch theorem applies to our fiber bundle, and it follows inductively that the cohomology of $F_k$ has Poincar\'{e} polynomial $\prod_{\ell=1}^{k-1}(1+\ell t^{n-1})$ and is generated as a ring by the $\alpha_{ab}$. The square-zero and antisymmetry relations shown above are pulled back from the sphere, and the third---the ``Arnold relation''---follows easily from symmetry and the fact that $H^{2(n-1)}(F_3)$ has rank $2$, so we obtain a surjection $\mathcal{A}\to H^*(F_k)$, where $\mathcal{A}$ is the graded commutative algebra of the theorem statement.

Through repeated use of the defining relations of $\mathcal{A}$, one can show that both $\mathcal{A}$ and $H^*(F_k)$ are spanned by monomials of the form 
\[\prod_{r=1}^s \alpha_{a_{1,r}a_{2,r}}\alpha_{a_{2,r}a_{3,r}}\cdots\alpha_{a_{m(r)-1,r}a_{m(r),r}}\] such that $a_{1,r}<a_{j,r}$ for each $j$, the sum of the $m(r)$ is at most $k$, the sets $\{a_{j,r}\}_{j=1}^{m(r)}$ are pairwise disjoint, and $a_{1,r}<a_{1,r+1}$ for each $r$---see \cite[Lem. 4.4]{Sinha:HLDO}. In fact, this collection forms a basis for $H^*(F_k)$, since an easy recursion shows that the free graded Abelian group it generates has the same Poincar\'{e} polynomial as above. Thus, under the homomorphism $\mathcal{A}\to H^*(F_k)$, a spanning set for the source is mapped to a basis for the target, implying the claim.
\end{proof}

Toward an understanding of $H^*(F_k)$ as a $\Sigma_k$-module, we observe that a square-free monomial $\alpha_{a_1b_1}\cdots\alpha_{a_mb_m}$ determines an equivalence relation on $k$---setting  $a_i\sim b_i$ and imposing transitivity---and hence a set partition. The relations of graded commutativity and antisymmetry preserve this set partition, as does the Arnold relation. While the action of $\Sigma_k$ does not, it does preserve the underlying partition of $k$. In this way, we obtain a $\Sigma_k$-submodule $M_\lambda\subseteq H^*(F_k)$ for each partition $\lambda$ of $k$ (note that this notation, as elsewhere, does not reflect the dependence on $n$). Notice that $M_\lambda$ lies in degree $d(\lambda):=(n-1)(k-\ell(\lambda))$. In the simplest case $\lambda=(k)$, we write $M_k:=M_{(k)}=H^{(n-1)(k-1)}(F_k)$.

Examining the basis element shown above, we observe that it lies in $M_\lambda$ if and only if the multiset $\{m(r)\}_{r=1}^s$ coincides with the multiset of blocks of $\lambda$ of size greater than $1$. In particular, each such basis element lies in some $M_\lambda$.

\begin{corollary}\label{cor:partition decomposition}
For any $n>1$ and $k\geq0$, we have the direct sum decomposition of $\Sigma_k$-modules
\[H^*(F_k)\cong \bigoplus_{\lambda \vdash k} M_\lambda.\]
\end{corollary}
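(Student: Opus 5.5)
The plan is to derive the decomposition directly from the monomial basis in the proof of Theorem \ref{thm:arnold-cohen}, together with the fact that each relation defining $\mathcal{A}$ is homogeneous with respect to the set partition attached to a monomial.

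First, we grade $\mathcal{A}$ by set partitions. A square-free monomial $\alpha_{a_1b_1}\cdots\alpha_{a_mb_m}$ determines the set partition $\rho$ generated by $a_i\sim b_i$. We define $\mathcal{A}_\rho$ as the span of the monomials with associated set partition $\rho$. The relations are homogeneous for this grading:
\begin{enumerate}
\item Graded commutativity and antisymmetry do not change the set of generating pairs.
\item Each term of the Arnold relation connects $a,b,c$ into a single block.
\item A non-square-free monomial vanishes by the square-zero relation.
\end{enumerate}
Hence $\mathcal{A}$, and therefore $H^*(F_k)$ via the isomorphism $\mathcal{A}\cong H^*(F_k)$ of Theorem \ref{thm:arnold-cohen}, splits as a sum over set partitions $\rho$ of the spans $\mathcal{A}_\rho$. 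There is one subtlety. A priori, some monomial could represent the same element of $H^*(F_k)$ while lying in a different $\mathcal{A}_\rho$, so we need this sum to be direct.

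Directness should come from the basis. Each basis monomial listed in the proof of Theorem \ref{thm:arnold-cohen} has a well-defined set partition: its blocks are the sets $\{a_{j,r}\}_j$ together with singletons. Every monomial in $\mathcal{A}_\rho$ can be rewritten in terms of these basis monomials using only homogeneous relations, so the rewriting stays inside $\mathcal{A}_\rho$. Thus the basis monomials with set partition $\rho$ span $\mathcal{A}_\rho$. Since together they form a basis of the whole space, the sum $\bigoplus_\rho \mathcal{A}_\rho$ is direct.

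The straightening step is the point that needs care: we must check that the reduction of \cite[Lem. 4.4]{Sinha:HLDO} uses only the defining relations, which is how it is stated. Even without inspecting it, homogeneity of the relations means we may project any expression onto its $\rho$-component. Applying this projection to the expansion of a monomial in $\mathcal{A}_\rho$ shows that the monomial is a combination of basis elements with set partition $\rho$.

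Finally, we regroup by the underlying partition $\lambda\vdash k$ of the set partition. We set $M_\lambda=\bigoplus_{\rho\mapsto\lambda}\mathcal{A}_\rho$. Each $\sigma\in\Sigma_k$ sends $\alpha_{ab}$ to $\pm\alpha_{\sigma(a)\sigma(b)}$, so it carries $\mathcal{A}_\rho$ onto $\mathcal{A}_{\sigma\rho}$. Since $\sigma$ preserves the underlying partition of $\rho$, each $M_\lambda$ is a $\Sigma_k$-submodule. This matches the definition of $M_\lambda$ in the text, and yields the claimed decomposition
\[H^*(F_k)\cong\bigoplus_{\lambda\vdash k}M_\lambda\]
as $\Sigma_k$-modules.
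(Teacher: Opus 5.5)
Your proposal is correct and follows essentially the paper's argument: the defining relations are homogeneous for the set partition attached to a monomial, and each basis monomial from the proof of Theorem \ref{thm:arnold-cohen} lies in exactly one $M_\lambda$, which gives both spanning and directness. Your explicit treatment of directness, via the well-defined projection onto $\rho$-components (i.e.\ homogeneity of the relation ideal for the grading by set partitions under join), makes precise what the paper leaves implicit.
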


\begin{corollary}\label{cor:cyclic and dimension}
The module $M_\lambda$ is free of rank $\frac{k!}{|W_\lambda|}\prod_{i=1}^{\ell(\lambda)}(\lambda_i-1)!$ as an Abelian group and cyclic as a $\Sigma_k$-module.
\end{corollary}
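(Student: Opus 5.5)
We will read both claims off the basis from the proof of Theorem \ref{thm:arnold-cohen}. The monomials described there form a $\mathbb{Z}$-basis of $H^*(F_k)$, and each lies in exactly one $M_\lambda$. So $M_\lambda$ is free Abelian, and a basis for it is the set of those monomials whose multiset of chain lengths $\{m(r)\}$ equals the multiset of blocks of $\lambda$ of size greater than $1$. Here we use Corollary \ref{cor:partition decomposition}, which says $M_\lambda$ is the span of exactly these basis elements.

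To compute the rank, we will count these monomials by first choosing the underlying set partition and then counting the chains supported on it. The set partitions of $\langle k\rangle$ of type $\lambda$ form a single $\Sigma_k$-orbit with stabilizer $W_\lambda$, so there are $k!/|W_\lambda|$ of them. Each basis monomial determines its set partition, since the chains have pairwise disjoint vertex sets; blocks of size $1$ correspond to indices not appearing. Now fix such a set partition. For each block of size $\lambda_i$, the chain $\alpha_{a_1a_2}\cdots\alpha_{a_{m-1}a_m}$ must begin at the minimal element $a_1$ of the block, and the remaining $\lambda_i-1$ elements may appear in any order, giving $(\lambda_i-1)!$ choices. The ordering of the chains is forced by the condition $a_{1,r}<a_{1,r+1}$. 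Blocks of size $1$ contribute $0!=1$. Multiplying gives the rank $\frac{k!}{|W_\lambda|}\prod_i(\lambda_i-1)!$.

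For cyclicity, we will show that $M_\lambda$ is generated over $\Sigma_k$ by a single monomial. Take $x_\lambda$ to be the product, over the blocks $\{c_1<\cdots<c_m\}$ of the canonical set partition of type $\lambda$, of the chains $\alpha_{c_1c_2}\cdots\alpha_{c_{m-1}c_m}$. Every basis monomial of $M_\lambda$ is a product of chains on the blocks of some set partition of type $\lambda$. Choose a permutation $\sigma$ carrying each block of the canonical partition to the corresponding block, with the increasing enumeration $c_1,\dots,c_m$ sent to the chain order $a_1,\dots,a_m$. Then $\sigma$ carries $x_\lambda$ to that monomial.

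The one point requiring care is how $\Sigma_k$ acts on the classes. Since $\sigma$ acts on $F_k$ by permuting coordinates, we have $\sigma^*\alpha_{ab}=\alpha_{\sigma(a)\sigma(b)}$ or $\alpha_{\sigma^{-1}(a)\sigma^{-1}(b)}$, depending on convention. Either way, replacing $\sigma$ by $\sigma^{-1}$ if necessary, the monomial is sent to a monomial exactly and not merely up to sign. Any signs from reordering factors under graded commutativity are units and do not affect generation. So the $\Sigma_k$-orbit of $x_\lambda$ contains a basis of $M_\lambda$ up to sign, and $M_\lambda$ is cyclic.
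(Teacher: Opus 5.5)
Your proposal is correct and follows the paper's proof: you count the basis monomials lying in $M_\lambda$ by choosing the underlying set partition of type $\lambda$ ($k!/|W_\lambda|$ choices, by orbit--stabilizer) and then ordering the non-minimal elements of each block, and you get cyclicity because every such basis monomial is the image of a fixed one under a permutation. Your attention to the action convention and to the signs produced by reordering chains under graded commutativity is a harmless refinement of the paper's remark that any two basis elements ``differ by a permutation.''
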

\begin{proof}
Consider a basis element lying in $M_\lambda$. The choice of the disjoint subsets $\{a_{j,r}\}_{j=1}^{m(r)}$ is equivalent to the choice of a set partition with underlying partition $\lambda$---the remaining blocks are singletons---of which there are $\frac{k!}{|W_\lambda|}$ by the orbit-stabilizer theorem. The remaining ambiguity is the choice for each $r$ of an ordering of the $a_{j,r}$ with $j>1$. Since the multiset of the $m(r)$ coincides with the multiset of blocks of $\lambda$ of size greater than $1$, the first claim follows. For the second claim, we observe that any two basis elements lying in $M_\lambda$ differ by a permutation.
\end{proof}

\subsection{Maps in and maps out}\label{section:maps in maps out}

In this section, we study two classes of maps between (products of) configuration spaces. To begin, given a set composition $\pi=(\pi_1,\ldots, \pi_\ell)$ of $k$, we write $F_\pi=\prod_{i=1}^{\ell} F_{|\pi_i|}$. This space carries a canonical action of $W_\pi$.

The simpler of the two types of map is the map $p_\pi: F_k\to F_\pi$ induced by the coordinate projections; we refer to this type of map as a ``map out''. There is also a ``map in'' of the form $i_\pi:F_\pi\to F_k$, which is given by the composite
\[F_\pi\to F_k\left(\sqcup_{\ell}\mathbb{R}^n\right)\to F_k(\mathbb{R}^n),\] where the first arrow is the inclusion of the source as the subspace of the target in which particles labeled by elements of the $i$th block of $\pi$ lie in the $i$th component of the disjoint union, and the second arrow is induced by any orientation preserving embedding. Since the space of such embeddings is path connected, this choice affects $i_\pi$ only up to homotopy. An important point is that, while the map out $p_\pi$ is $W_\pi$-equivariant, the map in $i_\pi$ is only $P_\pi$-equivariant.

In addition to maps in and maps out, there are symmetries. Specifically, given a permutation $\sigma\in \Sigma_k$, there is a canonical homeomorphism $\sigma_\pi:F_{\pi\sigma^{-1}}\to F_\pi$ fitting into the commutative diagram
\[\xymatrix{
F_k\ar[d]_-{p_{\pi\sigma^{-1}}}\ar[r]^-\sigma&F_k\ar[d]^-{p_\pi}\\
F_{\pi\sigma^{-1}}\ar[r]^-{\sigma_\pi}&F_\pi.
}\] 

Our first main result shows that the modules $M_\lambda$, which are the building blocks of the cohomology of $F_k$ by Corollary \ref{cor:partition decomposition}, are determined representation theoretically by our maps out.

\begin{proposition}\label{prop:induction description}
For any $\lambda\vdash k$, the homomorphism $p_\lambda^*: H^{*}(F_\lambda)\to H^*(F_k)$ induces an isomorphism 
\[M_\lambda\cong \mathrm{Ind}_{W_\lambda}^{\Sigma_k} H^{d(\lambda)}(F_\lambda).\]
\end{proposition}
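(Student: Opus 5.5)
The plan is to construct the $\Sigma_k$-equivariant map explicitly, then show it is surjective onto $M_\lambda$ and compare ranks. Since $p_\lambda:F_k\to F_\lambda$ is $W_\lambda$-equivariant, the pullback $p_\lambda^*:H^{d(\lambda)}(F_\lambda)\to H^{d(\lambda)}(F_k)$ is a $W_\lambda$-module map. By the universal property of induction (Frobenius reciprocity), it extends uniquely to a $\Sigma_k$-module map
\[\Phi:\mathrm{Ind}_{W_\lambda}^{\Sigma_k}H^{d(\lambda)}(F_\lambda)=\mathbb{Z}[\Sigma_k]\otimes_{\mathbb{Z}[W_\lambda]}H^{d(\lambda)}(F_\lambda)\to H^{d(\lambda)}(F_k),\qquad \sigma\otimes x\mapsto \sigma^*p_\lambda^*x\]
(with the appropriate left action convention).

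\textbf{Image.} By Theorem \ref{thm:arnold-cohen} applied to each factor, together with the Künneth theorem (all groups are free), $H^{d(\lambda)}(F_\lambda)\cong\bigotimes_i M_{\lambda_i}$. Here $M_{\lambda_i}$ is the top cohomology of $F_{\lambda_i}$, since $d(\lambda)=\sum_i(n-1)(\lambda_i-1)$ and each $H^*(F_{\lambda_i})$ vanishes above degree $(n-1)(\lambda_i-1)$. Each class in this tensor product is a sum of products of monomials in the $\alpha$'s, one factor per block. Pulling back along $p_\lambda$ sends the class $\alpha_{ab}$ in the $i$th factor to $\alpha_{ab}$ with $a,b$ in the block $\lambda_i$, since the Gauss maps commute with projection. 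The resulting monomials therefore have associated set partition exactly the canonical set partition of $\lambda$. Consequently the image of $p_\lambda^*$ lies in $M_\lambda$, and so the image of $\Phi$ lies in $M_\lambda$ by $\Sigma_k$-invariance of $M_\lambda$.

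\textbf{Surjectivity.} By the proof of Corollary \ref{cor:cyclic and dimension}, $M_\lambda$ is spanned by $\Sigma_k$-translates of a single basis monomial whose blocks are the blocks of the canonical set composition. That monomial is $p_\lambda^*$ of a product of top-degree basis monomials in the factors, so $\Phi$ is onto $M_\lambda$.

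\textbf{Rank comparison.} By the basis in Theorem \ref{thm:arnold-cohen}, $M_{\lambda_i}$ is free of rank $(\lambda_i-1)!$, so the tensor product has rank $\prod_i(\lambda_i-1)!$. The induced module is therefore free of rank $\frac{k!}{|W_\lambda|}\prod_i(\lambda_i-1)!$, which equals the rank of $M_\lambda$ by Corollary \ref{cor:cyclic and dimension}. A surjection between free abelian groups of the same finite rank is an isomorphism, which proves the claim.

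\textbf{Main obstacle.} I expect the delicate point to be the bookkeeping of conventions. One must check that the $W_\lambda$-action on $H^*(F_\lambda)$, which permutes equal-size factors with Koszul signs, is compatible with $p_\lambda^*$, so that $\Phi$ is well defined on the balanced tensor product. This compatibility follows from the equivariance of $p_\lambda$. One must also make sure the left/right action conventions on induction match the action of $\Sigma_k$ on $F_k$. Once these are settled, the rank count finishes the argument without any torsion issues.
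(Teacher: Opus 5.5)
Your proposal is correct and follows essentially the same argument as the paper. Both construct the map out of the induced module via the universal property and $W_\lambda$-equivariance of $p_\lambda$, then get surjectivity from cyclicity of $M_\lambda$ (Corollary \ref{cor:cyclic and dimension}) with a basis monomial in the image, and finish by comparing ranks of free abelian groups using K\"unneth.
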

\begin{proof}
Since the image of $H^{d(\lambda)}(F_\lambda)$ under $p_\lambda^*$ clearly lies in $M_\lambda$,  and since $p_\lambda$ is $W_\lambda$-equivariant, we obtain a map from right to left from the universal property of induction. As shown in Corollary \ref{cor:cyclic and dimension}, the $\Sigma_k$-module $M_\lambda$ is generated by any of its basis elements, and it is easy to see that the image of $p_\lambda^*$ contains a basis element. Since source and target are free Abelian, it suffices by the same result to observe that the righthand side has rank $\frac{k!}{|W_\lambda|}\prod_{i=1}^{\ell(\lambda)}(\lambda_i-1)!$, since $H^{d(\lambda)}(F_\lambda)\cong\bigotimes_{i=1}^{\ell(\lambda)}H^{\lambda_i-1}(F_{\lambda_i})$ by the K\"{u}nneth theorem, and $H^{\lambda_i-1}(F_{\lambda_i})$ has rank $(\lambda_i-1)!$ by Corollary \ref{cor:cyclic and dimension}.
\end{proof}

Our second  main result concerns the result of composing maps in and maps out at the level of group cohomology. In order to state this result, we introduce the following terminology.

\begin{definition}\label{def:fitting}
Let $\rho$ and $\pi$ be set compositions of $\langle k\rangle$. We say that $\sigma\in \Sigma_k$ is a \emph{fitting} of $\rho$ into $\pi$ if the (necessarily unique) dashed filler $d_\sigma$ exists in the following commuting diagram of sets:
\[\xymatrix{
\langle k\rangle\ar[d]_-\rho \ar[r]^-\sigma&\langle k\rangle\ar[d]^-\pi\\
\langle \ell(\rho)\rangle\ar@{-->}[r]^-{d_\sigma}& \langle \ell(\pi)\rangle
}\] We write $\Sigma_{\rho,\pi}\subseteq\Sigma_k$ for the set of fittings of $\rho$ into $\pi$, and we say that $\rho$ \emph{fits into} $\pi$ if the identity permutation lies in $\Sigma_{\rho,\pi}$. 
\end{definition}

Equivalently, $\rho$ fits into $\pi$ if every block of $\rho$ is contained in some block of $\pi$, and $\sigma$ is a fitting if and only if $\rho\sigma^{-1}$ fits into $\pi$. As a matter of notation, given a set composition $\pi$ of $\langle k\rangle$ and a partition $\lambda\vdash k$, we write
\[c_{\lambda,\pi}=\sum_{P_\pi\sigma W_\lambda,\,\sigma\in \Sigma_{\lambda,\pi}}[W_\lambda:P^{\sigma^{-1}}_\pi\cap W_\lambda].\]

\begin{proposition}\label{prop:endomorphism constant}
Fix a partition $\lambda\vdash k$ and a set composition $\pi$, and consider the composite endomorphism
\[f:H^*(\Sigma_k;H^*(F_k))\to H^*(P_\pi; H^*(F_\pi))\to H^*(\Sigma_k; H^*(F_k)),\] where the first arrow is induced by $i_\pi^*$ and the second by $p_\pi^*$. The restriction of $f$ to $H^*(\Sigma_k;M_\lambda)$ is given by multiplication by $c_{\lambda,\pi}$.
\end{proposition}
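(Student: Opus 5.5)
The plan is to factor $f$ into three pieces and compute each. Since $i_\pi$ is $P_\pi$-equivariant and $p_\pi$ is $W_\pi$-equivariant, hence $P_\pi$-equivariant, the first arrow is $\mathrm{res}^{\Sigma_k}_{P_\pi}$ followed by $i_\pi^*$. The second arrow is $p_\pi^*$ followed by $\mathrm{cor}^{\Sigma_k}_{P_\pi}$. This gives
\[f=\mathrm{cor}_{P_\pi}^{\Sigma_k}\circ (p_\pi^*i_\pi^*)_*\circ \mathrm{res}^{\Sigma_k}_{P_\pi},\]
where $e:=p_\pi^*i_\pi^*$ is a $P_\pi$-equivariant endomorphism of $H^*(F_k)$. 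It therefore suffices to identify $e$ on $M_\lambda$ as a $P_\pi$-module map, and then to compute the effect of sandwiching it between restriction and corestriction.

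\textbf{Step 1: compute $i_\pi^*$ on generators.} Take $a\neq b$. If $a,b$ lie in the same block $\pi_i$, then $\gamma_{ab}\circ i_\pi$ is the Gauss map of the $i$th factor $F_{|\pi_i|}$, so $i_\pi^*\alpha_{ab}$ is the corresponding generator there. If $a,b$ lie in different blocks, the two points lie in disjoint embedded copies of $\mathbb{R}^n$. By choosing the embedding to shrink each copy, $\gamma_{ab}\circ i_\pi$ becomes homotopic to a constant map, so $i_\pi^*\alpha_{ab}=0$. Multiplicativity then gives the following for a monomial with set partition $\rho$:
\begin{itemize}
\item if $\rho$ fits into $\pi$, then $e$ fixes the monomial, since $p_\pi^*$ sends the factor generators back to the $\alpha_{ab}$;
\item otherwise $e$ kills it.
\end{itemize}

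\textbf{Step 2: identify $e$ as a projection onto Mackey summands.} By Proposition \ref{prop:induction description}, $M_\lambda\cong\bigoplus_{\sigma\in\Sigma_k/W_\lambda}\sigma V$ with $V=p_\lambda^*H^{d(\lambda)}(F_\lambda)$. Each summand $\sigma V$ is spanned by monomials whose set partition is $\lambda\sigma^{-1}$. Restricting to $P_\pi$, the Mackey decomposition groups these summands by double cosets $P_\pi\sigma W_\lambda$, each group forming $\mathrm{Ind}^{P_\pi}_{P_\pi\cap \sigma W_\lambda\sigma^{-1}}\sigma V$. The fitting condition is constant on such double cosets, because $P_\pi$ preserves $\pi$ and $W_\lambda$ preserves the set partition of $\lambda$. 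By Step 1, $e$ is therefore the projection onto the summands indexed by double cosets with $\sigma\in\Sigma_{\lambda,\pi}$. I will also check the convention, namely that "$\lambda\sigma^{-1}$ fits into $\pi$" matches Definition \ref{def:fitting}.

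\textbf{Step 3: compute cor of the projected restriction via Shapiro.} Identify $H^*(\Sigma_k;M_\lambda)\cong H^*(W_\lambda;V)$ by Shapiro's lemma. Under this identification, the component of $\mathrm{res}^{\Sigma_k}_{P_\pi}$ in the double-coset summand for $\sigma$ becomes, after conjugation by $\sigma$, the restriction $H^*(W_\lambda;V)\to H^*(P_\pi^{\sigma^{-1}}\cap W_\lambda;V)$. Corestricting that summand back to $\Sigma_k$ corresponds to $\mathrm{cor}$ to $W_\lambda$. Each fitting double coset thus contributes $\mathrm{cor}\circ\mathrm{res}$, which is multiplication by $[W_\lambda:P_\pi^{\sigma^{-1}}\cap W_\lambda]$. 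Summing over fitting double cosets gives $c_{\lambda,\pi}$.

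I expect Step 3 to be the main obstacle. The difficulty is bookkeeping: the Mackey double coset formula must be made compatible with the Shapiro isomorphisms, including conjugation actions and sign twists. I would handle this first by writing the double coset formula $\mathrm{res}^{\Sigma_k}_{P_\pi}\mathrm{cor}^{\Sigma_k}_{W_\lambda}=\sum \mathrm{cor}\, c_\sigma\,\mathrm{res}$ at the cochain level, using $\Hom_{W_\lambda}(W,V)$. A second point needing care is the homotopy triviality in Step 1 when $a,b$ lie in different blocks. The cleanest argument is to note that on the image of $i_\pi$ the vector $x_b-x_a$ always points from one fixed small ball to another disjoint one.
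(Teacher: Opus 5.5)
Your proposal is correct and matches the paper's proof. Both write $f=\mathrm{cor}^{\Sigma_k}_{P_\pi}\circ p_\pi^*\circ i_\pi^*\circ\mathrm{res}^{\Sigma_k}_{P_\pi}$, pass through the Shapiro isomorphism $\mathrm{cor}^{\Sigma_k}_{W_\lambda}\circ p_\lambda^*$, apply the double coset formula, discard the non-fitting double cosets, and use $\mathrm{cor}\circ\mathrm{res}=$ index on the rest. The only difference is how you get the content of Lemma \ref{lem:fitting conditions}: you compute $p_\pi^*i_\pi^*$ directly as a ring endomorphism on the generators $\alpha_{ab}$, noting that a monomial whose set partition does not fit into $\pi$ must contain a factor $\alpha_{ab}$ with $a,b$ in different blocks. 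This is a slightly more direct route than the paper's, which reduces to the single-block case and uses the Arnold--Cohen basis.
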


For the proof, we require two basic observations.

\begin{lemma}\label{lem:composite identity}\label{lem:identity homotopy}
The composite $p_\pi\circ i_\pi$ is homotopic to the identity.
\end{lemma}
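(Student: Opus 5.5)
We will show that $p_\pi\circ i_\pi\colon F_\pi\to F_\pi$ is homotopic to the identity by exhibiting it, up to homotopy, as a product of maps $F_{|\pi_i|}\to F_{|\pi_i|}$, each homotopic to the identity.

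First, unwind the definitions. Recall that $i_\pi$ is the composite of the inclusion $F_\pi\hookrightarrow F_k(\sqcup_\ell\mathbb{R}^n)$ with the map induced by an orientation preserving embedding $e\colon\sqcup_\ell\mathbb{R}^n\to\mathbb{R}^n$. Write $e_i$ for the restriction of $e$ to the $i$th component. Then $i_\pi$ sends a point $(y^{(1)},\ldots,y^{(\ell)})$, with $y^{(i)}\in F_{|\pi_i|}$ labeled by the elements of $\pi_i$, to the configuration placing the particle labeled $a\in\pi_i$ at $e_i(y^{(i)}_a)$. The map out $p_\pi$ then remembers, for each $i$, only the particles labeled by $\pi_i$, in their induced order. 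Hence the $i$th component of $p_\pi\circ i_\pi$ depends only on $y^{(i)}$, and equals the map $F_{|\pi_i|}\to F_{|\pi_i|}$ induced by the embedding $e_i\colon\mathbb{R}^n\to\mathbb{R}^n$. This gives
\[p_\pi\circ i_\pi=\prod_{i=1}^\ell F_{|\pi_i|}(e_i).\]
The only bookkeeping needed here is to check that the ordering conventions for the labels agree, which holds because both maps use the identification of $\pi_i$ with $\langle|\pi_i|\rangle$ induced by the ordering of $\langle k\rangle$.

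Second, it suffices to show that each $F_m(e_i)$ is homotopic to the identity. Each $e_i$ is an orientation preserving embedding of $\mathbb{R}^n$ into itself. As already noted in the construction of $i_\pi$, the space of such embeddings is path connected; concretely, one can isotope $e_i$ to its derivative at $0$ by rescaling ($t\mapsto t^{-1}(e_i(tx)-e_i(0))$, after translating), and then connect that derivative to the identity within $GL_n^+(\mathbb{R})$. An isotopy of embeddings $e_t$ induces a homotopy $F_m(e_t)$, since embeddings are injective and therefore preserve configuration spaces. This yields $F_m(e_i)\simeq F_m(\mathrm{id})=\mathrm{id}$, and taking the product of these homotopies gives the claim.

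The main obstacle is the first step: ensuring that the factors genuinely decouple and that no label permutation is introduced by $p_\pi\circ i_\pi$. The second step is standard. If one prefers to avoid the isotopy argument, one can use affine embeddings $e_i(x)=c_i+\epsilon x$ and the straight-line homotopy $c_i t+(1-t+t\epsilon)x$ from the identity, which is injective at every time.
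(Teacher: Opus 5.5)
Your proof is correct and is the paper's argument: $p_\pi\circ i_\pi$ splits blockwise as a product of maps $F_{|\pi_i|}\to F_{|\pi_i|}$ induced by orientation-preserving self-embeddings of $\mathbb{R}^n$, and each of these is isotopic to the identity. One small slip, confined to your closing aside: the affine maps $x\mapsto c_i+\epsilon x$ are surjective onto $\mathbb{R}^n$, so for $\ell\geq 2$ their images overlap and they do not assemble into an embedding of $\sqcup_\ell\mathbb{R}^n$. You can repair this by first contracting radially into a small ball, e.g.\ $x\mapsto c_i+\epsilon x/(1+|x|)$; the straight-line homotopy from the identity is still injective at every time.
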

\begin{proof}
By inspection, the map in question breaks into a product over $i\in \langle\ell(\pi)\rangle$ of maps of the form $F_{|\pi_i|}\to F_{|\pi_i|}$, each induced by an orientation preserving self-embedding of $\mathbb{R}^n$. Since the space of such is path connected, each embedding is isotopic to the identity, and the claim follows.
\end{proof}

\begin{lemma}\label{lem:block vanishing}
If $a$ and $b$ lie in different blocks of $\pi$, then $i_\pi^*(\alpha_{ab})=0$.
\end{lemma}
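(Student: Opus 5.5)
The plan is to show that the composite $\gamma_{ab}\circ i_\pi\colon F_\pi\to S^{n-1}$ is nullhomotopic, so that $i_\pi^*(\alpha_{ab})=\gamma_{ab}^*\circ{}$ (pull back of the generator) vanishes. Write $i$ and $j$ for the indices with $a\in\pi_i$ and $b\in\pi_j$; by hypothesis $i\neq j$. By construction, $i_\pi$ factors through $F_k(\sqcup_\ell\mathbb{R}^n)$ followed by an orientation preserving embedding $e\colon\sqcup_\ell\mathbb{R}^n\to\mathbb{R}^n$. Since any such choice only affects $i_\pi$ up to homotopy, we are free to fix a convenient $e$. We choose $e$ to send the $m$th copy of $\mathbb{R}^n$ diffeomorphically onto the open unit ball centered at $3m\cdot v$, where $v$ is a fixed unit vector, for instance via $x\mapsto 3mv+x/\sqrt{1+|x|^2}$.

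With this choice, for any point of $F_\pi$ the particle $x_a$ lies in the ball $B(3iv,1)$ and $x_b$ lies in the ball $B(3jv,1)$. Consider the straight-line homotopy
\[
h_t=(1-t)(x_b-x_a)+t\cdot 3(j-i)v .
\]
We need $h_t$ never to vanish. This holds because $|(x_b-x_a)-3(j-i)v|<2$, while $|3(j-i)v|\geq 3$. Hence $h_t/|h_t|$ gives a homotopy from $\gamma_{ab}\circ i_\pi$ to the constant map at $\pm v$. A constant map induces zero on $H^{n-1}$, since $n-1>0$.

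The only point needing care is the freedom to choose $e$, and this was already justified in the text: the space of orientation preserving embeddings is path connected, so different choices give homotopic maps $i_\pi$. A second, equivalent approach would be to note that the Gauss map restricted to $F_\pi$ factors through the contractible space of pairs of points in two disjoint balls, $B(3iv,1)\times B(3jv,1)$. I expect no real obstacle here. The mild subtlety is to make explicit that $\alpha_{ab}$ is pulled back along $\gamma_{ab}$ from $S^{n-1}$, so that vanishing follows from the factorization through a contractible space.
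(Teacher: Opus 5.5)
Your proposal is correct and is essentially the paper's argument: the paper factors $\gamma_{ab}\circ i_\pi$ up to homotopy through the coordinate projection $F_\pi\to F_1\times F_1$ followed by $\gamma_{12}\circ i_{\pi'}$, and concludes from the contractibility of $F_1\times F_1$. This is the same as your second formulation via $B(3iv,1)\times B(3jv,1)$, and your explicit straight-line homotopy for a concrete choice of embedding is just a hands-on witness of that nullhomotopy.
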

\begin{proof}
Our assumption grants the commutativity up to homotopy of the diagram
\[\xymatrix{
F_\pi\ar[d]\ar[r]^-{i_\pi}&F_k\ar[r]^-{\gamma_{ab}}\ar[d]&S^{n-1}\\
F_1\times F_1\ar[r]^-{i_{\pi'}}&F_2\ar[ur]_-{\gamma_{12}},
}\] where  the vertical arrows are restricted coordinate projections, and $\pi'$ is the set composition with two singleton blocks. Thus, since $F_1$ is contractible, it follows that $\gamma_{ab}\circ i_\pi$ is nullhomotopic, and the claim follows.
\end{proof}

With these observations, we are in a position to prove the following basic result concerning the composition of maps in and maps out, which will a key ingredient in the proof of Proposition \ref{prop:endomorphism constant}.

\begin{lemma}\label{lem:fitting conditions}
Let $\rho$ and $\pi$ be set compositions of $\langle k\rangle$.
\begin{enumerate}
\item If $\rho$ fits into $\pi$, then $p_{\rho}\circ i_\pi\circ p_\pi$ is homotopic to $p_{\rho}$.
\item If $\rho$ does not fit into $\pi$, then the composite $i^*_{\pi}\circ p_{\rho}^*$ vanishes on $H^{d(\rho)}(F_\rho)$.
\end{enumerate}
\end{lemma}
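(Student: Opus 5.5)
For part (1), I will reduce to a statement about factors. Since $\rho$ fits into $\pi$, each block $\rho_j$ lies inside a unique block $\pi_{d(j)}$, where $d=d_{\mathrm{id}}$. So the projection $p_\rho$ factors as
\[
F_k\xrightarrow{p_\pi}F_\pi\xrightarrow{q}F_\rho ,
\]
where $q$ is the product over $j$ of the coordinate projections $F_{|\pi_{d(j)}|}\to F_{|\rho_j|}$. These forget the points of $\pi_{d(j)}$ that are not in $\rho_j$. Concretely, write $p_\rho=q\circ p_\pi$. Then
\[
p_\rho\circ i_\pi\circ p_\pi=q\circ(p_\pi\circ i_\pi)\circ p_\pi .
\]
By Lemma \ref{lem:composite identity}, $p_\pi\circ i_\pi\simeq\mathrm{id}$, so the right-hand side is homotopic to $q\circ p_\pi=p_\rho$.

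I should check one point: that $p_\rho$ really factors through $p_\pi$. It does, because the coordinates indexed by $\rho_j$ form a subset of those indexed by $\pi_{d(j)}$, and pairwise distinctness is inherited. This part is essentially formal.

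For part (2), I will use the Künneth theorem and the Arnold--Cohen basis to describe $H^{d(\rho)}(F_\rho)$. It is spanned by products over the blocks $\rho_j$ of top-degree classes of $F_{|\rho_j|}$. Each such class is a sum of monomials in the $\alpha_{ab}$ with $a,b\in\rho_j$, and within each block these monomials connect all of $\rho_j$. The reason is that the top degree $(|\rho_j|-1)(n-1)$ forces $|\rho_j|-1$ generators, and nonzero monomials are forests (this is visible from the basis in Theorem \ref{thm:arnold-cohen}). Pulling back along $p_\rho$ simply relabels these classes as $\alpha_{ab}$ with $a,b\in\rho_j\subseteq\langle k\rangle$.

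So $p_\rho^*$ of a basis element is a sum of products of generators $\alpha_{ab}$. For each term, the graph on $\langle k\rangle$ whose edges are the pairs $\{a,b\}$ that appear has connected components exactly the blocks of $\rho$. Since $\rho$ does not fit into $\pi$, some block $\rho_j$ meets two different blocks of $\pi$. A spanning tree of $\rho_j$ must then contain an edge $\{a,b\}$ with $a$ and $b$ in different blocks of $\pi$. The factor $\alpha_{ab}$ therefore appears in every monomial, and $i_\pi^*(\alpha_{ab})=0$ by Lemma \ref{lem:block vanishing}. Since $i_\pi^*$ is a ring map, each monomial maps to zero.

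The main obstacle is the justification that every monomial representing a top-degree class on a block is connected, i.e. a spanning tree. I will argue it by counting: a product of $|\rho_j|-1$ generators whose graph is disconnected must contain a cycle. By repeated use of the Arnold relation, together with $\alpha_{ab}^2=0$, any monomial whose graph contains a cycle vanishes; this is the standard fact that cycle monomials die. Alternatively, I can take the top-degree basis elements of Theorem \ref{thm:arnold-cohen} with a single chain $m(1)=|\rho_j|$. These are visibly paths through all of $\rho_j$, so I can work with that spanning set directly and avoid the cycle argument entirely. I prefer this second route.
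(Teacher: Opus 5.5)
Your proposal is correct and follows the paper's proof. Part (1) is the same factorization $p_\rho = p_{\rho,\pi}\circ p_\pi$ combined with Lemma \ref{lem:identity homotopy}, and part (2) rests on the same observation: a top-degree path monomial through a block of $\rho$ that meets two blocks of $\pi$ contains a factor $\alpha_{ab}$ killed by Lemma \ref{lem:block vanishing}. The only difference is organizational. The paper first treats a single block and reduces the general case to it via a homotopy-commutative diagram through $\prod_j F_{\pi_j}$. You instead pull the K\"{u}nneth basis back to $F_k$ as a product of relabeled paths and use that $i_\pi^*$ is a ring map, which lets you skip that diagram.
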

\begin{proof}
For the first claim, our assumption guarantees the existence of the unique dashed filler in the commuting diagram
\[\xymatrix{
F_k\ar[r]^-{p_\pi}\ar[d]_-{p_\rho}&F_\pi\ar@{-->}[dl]^-{p_{\rho,\pi}}\\
F_\rho,
}\] and we have $p_{\rho}\circ i_\pi\circ p_\pi=p_{\rho,\pi}\circ p_\pi\circ i_\pi\circ p_\pi\simeq p_{\rho,\pi}\circ p_\pi=p_\rho$ by Lemma \ref{lem:identity homotopy}. 

For the second claim, we first consider the special case in which $\rho$ has a single block. In this case, the claim is that $i_\pi^*$ vanishes on $H^{(n-1)(k-1)}(F_k)$ for any set composition $\pi$ with more than one block. Appealing to the basis exhibited in Theorem \ref{thm:arnold-cohen}, it suffices to show that $i_\pi^*$ annihilates $\prod_{a=1}^{k-1}\alpha_{\sigma(a)\sigma(a+1)}$ for every $\sigma\in\Sigma_k$ fixing $1$. Since $\pi$ has more than one block, there exists $1< a\leq k$ minimal such that $\sigma(1)$ and $\sigma(a)$ lie in different blocks. Then $\sigma(a-1)$ lies in the same block as $\sigma(1)$ by minimality, hence in a different block from $\sigma(a)$. The claim now follows from Lemma \ref{lem:block vanishing}, which implies that $i_\pi^*(\alpha_{\sigma(a-1)\sigma(a)})=0.$

In the general case, for each $1\leq j\leq \ell(\rho)$, we obtain a set composition $\pi_j$ of $\rho_j$ by intersecting $\rho_j$ with the blocks of $\pi$ and discarding any empty blocks. We then have the following homotopy commutative diagram, in which the vertical arrows are induced by the appropriate coordinate projections:
\[
\xymatrix{
F_\pi\ar[d]\ar[rr]^-{i_\pi}&& F_k\ar[d]\ar[r]^-{p_\rho}& F_\rho\ar@{=}[dl]\\
\displaystyle\prod_{j=1}^{\ell(\rho)} F_{\pi_j}\ar[rr]^-{\prod i_{\pi_j}}&&\displaystyle\prod_{j=1}^{\ell(\rho)} F_{\rho_j}
}
\] Since $\rho$ does not fit into $\pi$, there is some $\pi_j$ with more than one block, and the previous case implies the claim.
\end{proof}

Before embarking on the main argument, we remind the reader that the symbol $\lambda$ may denote both a partition and the associated set composition. 

\begin{proof}[Proof of Proposition \ref{prop:endomorphism constant}]
Explicitly, we have $f=\mathrm{cor}_{P_\pi}^{\Sigma_k}\circ p_\pi^*\circ i^*_\pi\circ \mathrm{res}^{\Sigma_k}_{P_\pi}$. Since $\mathrm{cor}_{W_\lambda}^{\Sigma_k}\circ p_\lambda^*$ is an isomorphism of $H^*(W_\lambda; H^{d(\lambda)}(F_\lambda))$ onto $H^*(\Sigma_k;M_\lambda)$ by Proposition \ref{prop:induction description} and Shapiro's lemma, it suffices to observe the following sequence of equalities between homomorphisms from $H^*(W_\lambda;H^{d(\lambda)}(F_\lambda))$ to  $H^*(\Sigma_k;H^*(F_k))$:
{\begin{align*}
f\circ\mathrm{cor}^{\Sigma_k}_{W_\lambda}\circ p_\lambda^*
&=\sum_{P_\pi\sigma W_\lambda}\mathrm{cor}_{P_\pi}^{\Sigma_k}\circ p_\pi^*\circ i^*_\pi\circ\mathrm{cor}_{P_\pi\cap W_\lambda^\sigma}^{P_\pi}\circ \mathrm{res}^{W_\lambda}_{P_\pi\cap W_\lambda^\sigma}\circ p_\lambda^*\\
&=\sum_{P_\pi\sigma W_\lambda}\mathrm{cor}_{P_\pi}^{\Sigma_k}\circ p_\pi^*\circ i^*_\pi\circ\mathrm{cor}_{P_\pi\cap W_\lambda^\sigma}^{P_\pi}\circ \mathrm{res}^{W_\lambda^\sigma}_{P_\pi\cap W_\lambda^\sigma}\circ p_{\lambda\sigma^{-1}}^*\circ\sigma_\lambda^*\\
&=\sum_{P_\pi\sigma W_\lambda} \mathrm{cor}_{P_\pi\cap W_\lambda^\sigma}^{\Sigma_{k}}\circ p_\pi^*\circ i^*_\pi\circ  \mathrm{res}^{W_\lambda^\sigma}_{P_\pi\cap W_\lambda^\sigma}\circ p_{\lambda\sigma^{-1}}^*\circ\sigma_\lambda^*\\
&=\sum_{P_\pi\sigma W_\lambda} \mathrm{cor}_{P_\pi\cap W_\lambda^\sigma}^{\Sigma_{k}}\circ p_\pi^*\circ i^*_\pi\circ p_{\lambda\sigma^{-1}}^*\circ \mathrm{res}^{W_\lambda^\sigma}_{P_\pi\cap W_\lambda^\sigma}\circ \sigma_\lambda^*\\
&=\sum_{P_\pi\sigma W_\lambda: \,\sigma\in\Sigma_{\lambda,\pi}} \mathrm{cor}_{P_\pi\cap W_\lambda^\sigma}^{\Sigma_{k}}\circ p_{\lambda\sigma^{-1}}^*\circ  \mathrm{res}^{W_\lambda^\sigma}_{P_\pi\cap W_\lambda^\sigma}\circ \sigma_\lambda^*\\
&=\sum_{P_\pi\sigma W_\lambda: \,\sigma\in\Sigma_{\lambda,\pi}} \mathrm{cor}_{P_\pi\cap W_\lambda^\sigma}^{\Sigma_{k}}\circ  \mathrm{res}^{W_\lambda^\sigma}_{P_\pi\cap W_\lambda^\sigma}\circ p_{\lambda
\sigma^{-1}}^*\circ  \sigma_\lambda^*\\
&=\sum_{P_\pi\sigma W_\lambda: \,\sigma\in\Sigma_{\lambda,\pi}} [W_\lambda^\sigma:P_\pi\cap W_\lambda^\sigma] \cdot\mathrm{cor}_{W_\lambda^\sigma}^{\Sigma_{k}}\circ p_{\lambda\sigma^{-1}}^*\circ  \sigma_\lambda^*\\
&=\sum_{P_\pi\sigma W_\lambda: \,\sigma\in\Sigma_{\lambda,\pi}} [W_\lambda^\sigma:P_\pi\cap W_\lambda^\sigma] \cdot\mathrm{cor}_{W_\lambda}^{\Sigma_{k}}\circ p_{\lambda}^*\\
&=\sum_{P_\pi\sigma W_\lambda: \,\sigma\in\Sigma_{\lambda,\pi}} [W_\lambda:P_\pi^{\sigma^{-1}}\cap W_\lambda] \cdot\mathrm{cor}_{W_\lambda}^{\Sigma_{k}}\circ p_{\lambda}^*.
\end{align*}}Here we have used the double coset formula in the first line and basic properties of (co)restriction in the third, fourth, sixth, and seventh---recalling that $i_\pi$ and $p_\pi$ are $P_\pi$-equivariant---and the ninth is elementary. The second and eighth follow from the commutative diagram
\[\xymatrix{
F_k\ar[d]_-{p_{\lambda\sigma^{-1}}}\ar[r]^-\sigma&F_k\ar[d]^-{p_\lambda}\\
F_{\lambda\sigma^{-1}}\ar[r]^-{\sigma_\lambda}&F_\lambda
}\] For the fifth, we first appeal to Lemma \ref{lem:fitting conditions}(2) to guarantee that $i_\pi^*\circ p_{\lambda\sigma^{-1}}^*\circ\sigma_\lambda^*$ vanishes on $H^{d(\lambda)}(F_{\lambda})$ unless $\lambda\sigma^{-1}$ fits into $\pi$, which is to say that $\sigma\in\Sigma_{\lambda,\pi}$. For the remaining terms, we appeal to Lemma \ref{lem:fitting conditions}(1).
\end{proof}

\section{The Cartan--Leray spectral sequence}

In this section, we prove Theorem \ref{thm:decomposition} and Corollary \ref{cor:am}, modulo a certain double coset calculation that we defer to the Section \ref{section:double cosets}. We work with integer coefficients in Section \ref{section:first observations} before switching in Section \ref{section:AM theorem} to working over a field of fixed prime characteristic $p$.

\subsection{First observations}\label{section:first observations}

To begin, we write $E(k)$ for the spectral sequence obtained by applying Corollary \ref{cor:cartan leray} to the projection $F_k\to B_k$, a regular covering with deck group $\Sigma_k$. Consulting Corollary \ref{cor:partition decomposition}, we obtain the following description of the second page of this spectral sequence.

\begin{corollary}\label{cor:second page}
For every $k\geq0$, there is a canonical isomorphism
\[E(k)_2\cong\bigoplus_{\lambda\vdash k} H^*(\Sigma_k; M_\lambda).\]
\end{corollary}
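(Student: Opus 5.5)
The plan is to read the statement off Corollary \ref{cor:cartan leray}, applied to the regular covering $F_k\to B_k$ with deck group $\Sigma_k$. That corollary identifies the second page of $E(k)$ as
\[E(k)_2^{s,t}\cong H^s(\Sigma_k;H^t(F_k)).\]
So the only work is to decompose the coefficient module. By Corollary \ref{cor:partition decomposition}, there is a direct sum decomposition of $\Sigma_k$-modules $H^*(F_k)\cong\bigoplus_{\lambda\vdash k}M_\lambda$.

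Since $M_\lambda$ is concentrated in the single degree $d(\lambda)$, the decomposition restricts in each degree $t$ to
\[H^t(F_k)\cong\bigoplus_{\lambda\vdash k,\ d(\lambda)=t}M_\lambda.\]
This is a finite sum, because there are finitely many partitions of $k$.

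Group cohomology $H^s(\Sigma_k;-)$ is an additive functor, so it preserves finite direct sums. Hence
\[H^s(\Sigma_k;H^t(F_k))\cong\bigoplus_{d(\lambda)=t}H^s(\Sigma_k;M_\lambda).\]
Summing over $s$ and $t$ gives the stated isomorphism. Under it, each summand $H^*(\Sigma_k;M_\lambda)$ lies in the row $t=d(\lambda)$.

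For canonicity, the $M_\lambda$ are canonical submodules of $H^*(F_k)$: they are defined by the partition type of monomials. The isomorphism is induced by their inclusions, so no choices enter.

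There is no genuine obstacle. The one point to check is that the decomposition of Corollary \ref{cor:partition decomposition} is one of $\Sigma_k$-modules and not just of abelian groups. This holds because the action of $\Sigma_k$ preserves the underlying partition of a monomial, as observed before that corollary.
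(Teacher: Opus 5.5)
Your proposal is correct and follows the paper's approach: apply Corollary \ref{cor:cartan leray} to the covering $F_k\to B_k$, then decompose the coefficients using Corollary \ref{cor:partition decomposition}. You simply spell out the degreewise splitting and the additivity of $H^s(\Sigma_k;-)$, which the paper leaves implicit.
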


We next establish a vanishing range on the final page of the spectral sequence. The main input to this range is the following result, which is essentially a theorem of Nakaoka, as the proof makes clear.

\begin{theorem}\label{thm:upper bound}
For $i> (n-1)(k-1)$, we have $H_i(B_k;\mathbb{Z})=0$.
\end{theorem}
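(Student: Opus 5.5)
The bound follows from Poincaré duality together with the compactly supported Cartan--Leray spectral sequence of Corollary \ref{cor:cs cartan leray}. The unordered configuration space $B_k$ is a manifold of dimension $nk$, possibly non-orientable. Poincaré duality with the orientation local system gives
\[
H_i(B_k;\mathbb{Z})\cong H_c^{nk-i}(B_k;\mathbb{Z}^w).
\]
So it suffices to show that $H_c^j(B_k;\mathbb{Z}^w)=0$ for $j<nk-(n-1)(k-1)=k+n-1$.

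To do this, we apply Corollary \ref{cor:cs cartan leray} to the free action of $\Sigma_k$ on the orientable manifold $F_k$, which yields a spectral sequence
\[
E_2^{s,t}\cong H^s(\Sigma_k;H_c^t(F_k;d^*\mathrm{sgn}))\implies H_c^{s+t}(B_k;\mathbb{Z}^w).
\]
Since $s\geq0$, vanishing of the abutment in total degree $j<k+n-1$ follows once we know that $H_c^t(F_k)=0$ for $t<k+n-1$. Poincaré duality on the oriented manifold $F_k$ gives $H_c^t(F_k)\cong H_{nk-t}(F_k)$. By Theorem \ref{thm:arnold-cohen}, the homology of $F_k$ is free and concentrated in degrees at most $(n-1)(k-1)$. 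It therefore vanishes when $nk-t>(n-1)(k-1)$, that is, when $t<k+n-1$.

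Every term $E_2^{s,t}$ with $s+t<k+n-1$ thus has $t<k+n-1$ and is zero, so the abutment vanishes in the required range and the claim follows. This is essentially Nakaoka's argument, rephrased.

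The main point needing care is bookkeeping rather than substance. First, the twist by $d^*\mathrm{sgn}$ is harmless, since it only changes the $\Sigma_k$-action and not the underlying groups. Second, integrality causes no trouble: we use integral duality on the possibly non-orientable $B_k$ with twisted coefficients, and the vanishing argument needs no field coefficients. One could instead try to argue directly with the homology Cartan--Leray spectral sequence, but that approach fails because $H_s(\Sigma_k;-)$ does not vanish for large $s$. Passing to compact supports converts the issue into a lower bound that the $E_2$-page sees immediately.
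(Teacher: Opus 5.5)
Your proof is correct, and its overall structure matches the paper's. Twisted Poincar\'e duality on $B_k$, followed by the spectral sequence of Corollary \ref{cor:cs cartan leray}, reduces everything to the vanishing of $H^t_c(F_k;\mathbb{Z})$ for $t<n+k-1$.

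Where you diverge is in how you obtain that vanishing. The paper identifies $H^*_c(F_k;\mathbb{Z})$ with $H^*((S^n)^{\wedge k},\Delta;\mathbb{Z})$ and cites Nakaoka's estimate, which uses only that $S^n$ is $(n-1)$-connected. You instead apply Poincar\'e duality on the oriented manifold $F_k$ and read off the range from the top degree $(n-1)(k-1)$ given by Theorem \ref{thm:arnold-cohen}. The two inputs give exactly the same range, as they must, since yours is the Poincar\'e dual of the paper's.

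Your version is more self-contained: it uses only results already proved in the paper, and it shows that the bound is a reflection of the Poincar\'e polynomial of $F_k$ having degree $(n-1)(k-1)$. The paper's version does not need the explicit computation of $H^*(F_k)$. That makes it the kind of input that survives beyond Euclidean space, as in the remark following Corollary \ref{cor:vanishing}.

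One small bookkeeping point: Theorem \ref{thm:arnold-cohen} is stated in cohomology, so the vanishing of $H_j(F_k;\mathbb{Z})$ for $j>(n-1)(k-1)$ needs a brief justification. You can use the universal coefficient theorem together with finite generation of $H_*(F_k;\mathbb{Z})$, or rerun the Fadell--Neuwirth induction directly in homology.
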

\begin{proof}
By Poincar\'{e} duality, it suffices that $H^{nk-i}_c(B_k;\mathbb{Z}^w)=0$ for $i$ in the claimed range. Since $F_k$ is orientable with free $\Sigma_k$-action, we may calculate this cohomology by means of the spectral sequence of Corollary \ref{cor:cs cartan leray}, so it suffices to establish the vanishing of 
\[H^{nk-i}_c(F_k;\mathbb{Z})\cong H^{nk-i}((S^n)^{\wedge k},\Delta;\mathbb{Z})\]
in this range, where $\Delta$ denotes the image in the smash product of the fat diagonal in $(S^n)^k$. Since $S^n$ is $(n-1$)-connected, this group vanishes by \cite[Prop. 4.3]{Nakaoka:CSP} for $nk-i<n+k-1$, as desired.
\end{proof}

\begin{corollary}\label{cor:vanishing}
For any $s+t>(n-1)(k-1)$, we have $E(k)_\infty^{s,t}=0$.
\end{corollary}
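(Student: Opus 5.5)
The plan is to read off the vanishing of $E(k)_\infty^{s,t}$ from Theorem \ref{thm:upper bound}, together with the universal coefficient theorem and the identification of the abutment in Corollary \ref{cor:cartan leray}. By that corollary, $E(k)$ converges to $H^*(B_k;\mathbb{Z})$. This means that for each total degree $m$ there is a finite filtration of $H^m(B_k;\mathbb{Z})$ whose associated graded pieces are the groups $E(k)_\infty^{s,m-s}$. It therefore suffices to show that $H^m(B_k;\mathbb{Z})=0$ for $m>(n-1)(k-1)$. A group carrying a finite filtration vanishes if and only if every filtration quotient vanishes, so this gives the claim.

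To get vanishing in cohomology from vanishing in homology, I will use the universal coefficient theorem:
\[H^m(B_k;\mathbb{Z})\cong \Hom(H_m(B_k),\mathbb{Z})\oplus \mathrm{Ext}(H_{m-1}(B_k),\mathbb{Z}).\]
When $m>(n-1)(k-1)$, the $\Hom$ term vanishes by Theorem \ref{thm:upper bound}. For the $\mathrm{Ext}$ term I need care at the boundary degree $m=(n-1)(k-1)+1$, where $m-1=(n-1)(k-1)$ lies outside the range of Theorem \ref{thm:upper bound}. Since $\mathrm{Ext}(A,\mathbb{Z})$ only sees the torsion of a finitely generated group $A$, I must show that $H_{(n-1)(k-1)}(B_k;\mathbb{Z})$ is torsion free. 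Finite generation holds because $B_k$ has the homotopy type of a finite CW complex, for instance as the interior of a compact manifold with boundary via the Fulton--MacPherson compactification.

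The main obstacle is this top-degree torsion freeness, and I would handle it as follows. $B_k$ is homotopy equivalent to a CW complex of dimension $(n-1)(k-1)$; a standard model is the Fox--Neuwirth/Salvetti-type cell structure, or one can deform onto a spine. Given such a complex, its top homology is the kernel of the top cellular boundary map, which is a subgroup of a free abelian group and hence free. If I want to avoid citing a spine, the alternative is to re-run the argument of Theorem \ref{thm:upper bound} one degree further. I would check that Nakaoka's connectivity estimate, combined with the compactly supported spectral sequence of Corollary \ref{cor:cs cartan leray}, controls $H^{nk-i}_c$ for $nk-i\le n+k-1$, and in particular makes $H_c^{n+k-1}(B_k;\mathbb{Z}^w)$ torsion free. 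By Poincaré duality this yields the needed torsion freeness in homological degree $(n-1)(k-1)$. I expect the cell-structure route to be cleaner.

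Once $H^m(B_k;\mathbb{Z})=0$ for all $m>(n-1)(k-1)$, convergence gives $E(k)_\infty^{s,t}=0$ whenever $s+t>(n-1)(k-1)$. This is the assertion of Corollary \ref{cor:vanishing}.
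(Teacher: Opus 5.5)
Your argument is correct and is the deduction the paper leaves implicit when it states this as an immediate consequence of Theorem \ref{thm:upper bound}. You are also right that the boundary degree $(n-1)(k-1)+1$ genuinely requires $H_{(n-1)(k-1)}(B_k;\mathbb{Z})$ to be torsion free, a point the paper passes over even though its mod $p$ use in the proof of Corollary \ref{cor:am} (at $s=1$) lands exactly in that degree.

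Of your two fixes, the second is the better one, because it needs nothing beyond the paper; the first imports a $(n-1)(k-1)$-dimensional CW model, which would make Theorem \ref{thm:upper bound} unnecessary. Concretely, in the spectral sequence of Corollary \ref{cor:cs cartan leray}, Nakaoka's vanishing leaves only one term in total degree $n+k-1$. That term is $E_2^{0,n+k-1}$, the $\Sigma_k$-invariants of the sign-twisted free group $H_c^{n+k-1}(F_k)\cong H_{(n-1)(k-1)}(F_k)$. No differential leaves it, so $H_{(n-1)(k-1)}(B_k;\mathbb{Z})\cong H_c^{n+k-1}(B_k;\mathbb{Z}^w)$ is torsion free.
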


\begin{remark}
Building as we do on the foundational work of Nakaoka, Kallel in \cite[Thm. 1.1]{Kallel:SPDHDCS} establishes a result for more general manifolds that specializes to the $n$ even case of Theorem \ref{thm:upper bound}, which is the case in which $B_k$ is orientable, as well as a corresponding result in twisted cohomology in the odd dimensional case. Although we have not checked the details, it seems likely that our spectral sequence technique serves to remove the orientability assumptions in Kallel's result.
\end{remark}

\subsection{The Arone--Mahowald theorem}\label{section:AM theorem} The purpose of this section is to prove Corollary \ref{cor:am} assuming the decomposition theorem. We begin with a few basic observations on special partitions and subordinacy. We refer the reader to the introduction for a reminder of the meaning of these concepts.

\begin{lemma}\label{lem:specialness}
Given a partition $\lambda\vdash k$, the following conditions are equivalent.
\begin{enumerate}
\item $\lambda$ is special.
\item $(\lambda_i)$ is special for every $i$.
\item $\nu$ is special for some $\nu\preccurlyeq \lambda$.
\item $\nu$ is special for some $\lambda\preccurlyeq\nu$.
\item $\nu$ is special for every $\nu\preccurlyeq \lambda$.
\item $\nu$ is special for every $\lambda\preccurlyeq \nu$.
\end{enumerate}
\end{lemma}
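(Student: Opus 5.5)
The plan is to reduce all six conditions to a single statement about one subordinacy step. Specialness is defined blockwise: a partition is special exactly when each block size lies in the set $S$. For $n$ odd, $S$ consists of the powers of $p$. For $n$ even, $S$ consists of the numbers $p^a$ and $2p^a$. So (1)$\Leftrightarrow$(2) holds by definition, since $(\lambda_i)$ is the one-block partition of $\lambda_i$. Subordinacy is the reflexive–transitive closure of the elementary move, and I take reflexivity as intended: "perhaps multiple times" includes zero times, which is needed for (3)–(6) to even make sense. The key claim is therefore the following.

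\textbf{Key claim.} If $\nu$ is obtained from $\lambda$ by one elementary split, then $\nu$ is special if and only if $\lambda$ is special.

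In an elementary split, a block of size $pr$ is replaced by $p$ blocks of size $r$, and all other blocks are untouched. By the blockwise nature of specialness, the claim reduces to the equivalence $pr\in S \iff r\in S$, which I will check directly. For $n$ odd this is clear, since $pr$ is a power of $p$ exactly when $r$ is. For $n$ even:
\begin{itemize}
\item If $r=p^a$ then $pr=p^{a+1}$, and if $r=2p^a$ then $pr=2p^{a+1}$.
\item Conversely, suppose $pr=p^b$. Then $b\geq1$ and $r=p^{b-1}$.
\item Suppose instead $pr=2p^b$. If $p$ is odd, then $b\geq1$ and $r=2p^{b-1}$. If $p=2$, then $2p^b=p^{b+1}$, so $r=p^b\in S$.
\end{itemize}
The case $p=2$ is the one delicate point, and it is harmless because for $p=2$ the set $S$ is just the powers of $2$ in both parities.

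Once the key claim is in hand, induction on the number of elementary moves shows that specialness is constant along any chain $\nu\preccurlyeq\lambda$ or $\lambda\preccurlyeq\nu$. This gives (1)$\Rightarrow$(5) and (1)$\Rightarrow$(6). Taking $\nu=\lambda$ by reflexivity gives (5)$\Rightarrow$(3) and (6)$\Rightarrow$(4). Finally, (3)$\Rightarrow$(1) and (4)$\Rightarrow$(1) follow from the same invariance applied in the reverse direction.

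The main obstacle is only bookkeeping. One must make sure the direction of a split does not matter, which is why the key claim is stated as an equivalence in both directions. One must also handle $p=2$ separately in the even case, as above.
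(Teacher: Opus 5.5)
Your proposal is correct and follows the paper's own proof. Both arguments reduce all six conditions to the invariance of specialness under a single elementary split, checked via $r\in S\iff pr\in S$, and both use reflexivity of $\preccurlyeq$ for the trivial implications. Your explicit handling of $p=2$ in the even case (where $2p^b=p^{b+1}$) is slightly more careful than the paper's parenthetical ``resp.\ twice a power of $p$.'' Read literally, that parenthetical fails for $p=2$ (take $r=1$), but this does not affect the conclusion.
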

\begin{proof}
The first and second conditions are equivalent essentially by definition, and the first clearly implies the third and fourth. Since the fifth and sixth each imply the first, it suffices to show that the third implies the fifth and the fourth implies the sixth. For this, it suffices to observe, assuming that $\lambda$ is obtained from $\nu$ by splitting a block of size $pr$ into $p$ blocks of size $r$, that $\lambda$ is special if and only if $\nu$ is so; indeed, $r$ is a power of $p$ if and only if $pr$ is a power of $p$ (resp. twice a power of $p$), and the remaining blocks of both partitions coincide.
\end{proof}

\begin{lemma}\label{lem:unique maximals}
Given a partition $\lambda\vdash k$, there are unique partitions $\mu$ and $\nu$ such that $\mu$ is maximal, $\nu$ is minimal, and $\nu\preccurlyeq\lambda\preccurlyeq \mu$.
\end{lemma}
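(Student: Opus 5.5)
We will encode each partition by a combinatorial invariant that cannot change under splitting, and show that this invariant picks out a unique maximal and a unique minimal element. Write each block size as $\lambda_i = p^{e_i} m_i$ with $p\nmid m_i$.

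\emph{Invariant.} A splitting move replaces one block $p^{e}m$ (with $e\geq1$) by $p$ blocks $p^{e-1}m$. So the move preserves the $p$-prime parts $m$ of the blocks involved. For each $m$ coprime to $p$, consider the quantity
\[N_m(\lambda)=\sum_{i:\, m_i=m} p^{e_i}.\]
This is invariant under splitting, because $p\cdot p^{e-1}=p^e$. Hence $N_m$ is constant along chains of subordinacy.

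\emph{Minimal element.} The minimal element $\nu$ is obtained by fully splitting: every block $p^e m$ becomes $p^e$ blocks of size $m$. This is reachable from $\lambda$ by repeated splitting, and it admits no further splitting. It is also the unique such partition below $\lambda$, since any $\nu'\preccurlyeq\lambda$ that is minimal has all blocks prime to $p$. Its multiset of blocks is then determined by the invariants: $N_m(\nu')$ blocks of size $m$.

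\emph{Maximal element.} Maximality means no $p$ equal blocks $r,\dots,r$ can be merged into one block $pr$. So a maximal $\mu$ has, for each size $r$, fewer than $p$ blocks of size $r$. Consider the blocks of $\mu$ with prime-to-$p$ part $m$, and let $c_e<p$ be the number of blocks of size $p^e m$. Then $N_m(\mu)=\sum_e c_e p^e$ with digits $0\le c_e<p$. This is the base-$p$ expansion of $N_m(\mu)=N_m(\lambda)$, so it is uniquely determined by $\lambda$.

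\emph{Existence.} A maximal $\mu$ above $\lambda$ exists by repeatedly merging $p$ equal blocks. The merging process terminates because the number of blocks strictly decreases at each step.

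\emph{Key subtlety.} Uniqueness needs more than the invariant. We must know that any partition with the same invariants $N_m$ is genuinely comparable, rather than merely sharing invariants. For the statement, however, we only need that every maximal $\mu$ with $\lambda\preccurlyeq\mu$ has the prescribed multiset of blocks. That follows from the invariants together with the characterization of maximality by base-$p$ digits, since a partition is determined by its multiset of blocks. The same reasoning applies to minimal partitions $\nu\preccurlyeq\lambda$.

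The main point to check carefully is that maximality is exactly the digit condition. One direction is immediate: if some size $r$ occurs at least $p$ times, then $p$ of those blocks merge into $pr$, so the partition is not maximal. Conversely, if $\mu$ has fewer than $p$ blocks of every size, then no merge is possible, so $\mu$ is maximal.
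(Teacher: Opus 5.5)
Your argument is correct, and it takes a different route from the paper's. The paper proves uniqueness of the maximal element by a confluence (diamond-lemma) argument. If $\lambda$ arises from both $\mu_1$ and $\mu_2$ by a single splitting of blocks of sizes $pr_1$ and $pr_2$, then either $r_1=r_2$ and $\mu_1=\mu_2$, or $\mu_2$ still contains $p$ blocks of size $r_1$ and merging them gives a common upper bound. Induction along chains then shows that any two upper bounds of $\lambda$ have a common upper bound, which forces two maximal ones to coincide; the minimal case is left to the reader.

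You instead use the splitting invariants $N_m(\lambda)=\sum_{m_i=m}p^{e_i}$, together with intrinsic characterizations of maximality (fewer than $p$ blocks of each size) and minimality (no block divisible by $p$). Uniqueness then reduces to uniqueness of base-$p$ expansions, and existence comes from termination of merging or splitting.

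The paper's argument is shorter and purely local, needing no invariant. Yours yields an explicit formula: $\mu$ has as many blocks of size $p^em$ as the $e$th base-$p$ digit of $N_m(\lambda)$, and $\nu$ has $N_m(\lambda)$ blocks of size $m$. This is precisely the description the paper later uses without separate justification in the proof of Lemma \ref{lem:constant nonzero}, and your approach also handles the minimal case explicitly.

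Your ``key subtlety'' paragraph worries about more than is needed, but you resolve it correctly. Since existence is supplied by merging, you never need the statement that partitions with equal invariants are comparable.
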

\begin{proof}
For the unique maximal, it suffices to show that, if $\lambda$ is subordinate to $\mu_1$ and $\mu_2$, then $\mu_1$ and $\mu_2$ have a common upper bound. By induction, we may assume without loss of generality that, for each $i\in \{1,2\}$, the partition $\lambda$ is obtained from $\mu_i$ by splitting a block of size $pr_i$ into $p$ blocks of size $r_i$. If $r_1=r_2$, then it follows that $\mu_1=\mu_2$; otherwise, $\mu_2$ contains at least $p$ blocks of size $r_1$, and we obtain a common upper bound by replacing these $p$ blocks with a single block of size $pr_1$.

The nearly identical argument for the unique minimal is left to the reader.
\end{proof}

We now come to the proof of the Arone--Mahowald theorem, which, in our formulation, is the assertion that $H^*(\Sigma_k;M_\lambda)$ vanishes in positive degrees if $\lambda$ is not special.

\begin{proof}[Proof of Corollary \ref{cor:am}] From Shapiro's lemma, Proposition \ref{prop:induction description}, and the K\"{u}nneth theorem, we have the isomorphisms 
\[H^*(\Sigma_k;M_\lambda)\cong H^*(W_\lambda; H^{d(\lambda)}(F_\lambda))\cong H^*\left(W_\lambda; \bigotimes_{i=1}^{\ell(\lambda)}H^{d(\lambda_i)} (F_{\lambda_i})\right).\] Applying Corollary \ref{cor:lhs} to $P_\lambda\leq W_\lambda$, and recalling that $P_\lambda$ is the product of the $\Sigma_{\lambda_i}$, the claim follows for $\lambda\neq(k)$ by induction and Lemma \ref{lem:specialness}. In the case $\lambda=(k)$, we claim that $H^*(\Sigma_k;M_k)\subseteq E(k)_2$ supports no nonzero differential; indeed, Theorem \ref{thm:decomposition} grants that the image of $H^*(\Sigma_k;M_k)$ under any differential lies in $\bigoplus_{\nu\prec (k)}H^*(\Sigma_k;M_\nu)$, but $\nu$ is not special for $\nu\prec(k)$ by Lemma \ref{lem:specialness}, so $H^*(\Sigma_k;M_\nu)=0$. Since $H^s(\Sigma_k;M_k)$ lies in bidegree $(s, (k-1)(n-1))$, Corollary \ref{cor:vanishing} now implies the claim.
\end{proof}

Note that, in this argument, the claim for $\lambda\neq (k)$ only uses the validity of Theorem \ref{thm:decomposition} for $E(\ell)$ with $\ell<k$.

\subsection{The decomposition theorem} Recall that the main claim of Theorem \ref{thm:decomposition} is the existence, for every $\mu\vdash k$ maximal under subordinacy, of a spectral subsequence $E(k,\mu)\subseteq E(k)$, whose second page, under the isomorphism of Corollary \ref{cor:second page}, consists of the summands indexed by $\lambda\preccurlyeq\mu$. The key step in the proof, therefore, is the following result.

\begin{lemma}\label{lem:induction step}
If $\mu\vdash k$ is maximal under subordinacy, then the canonical projection of $E(k)_2$ onto $E(k,\mu)_2$ extends to a map $\rho_\mu:E(k)\to E(k)$ of spectral sequences.
\end{lemma}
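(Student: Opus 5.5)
We build the required map out of the endomorphisms of Proposition \ref{prop:endomorphism constant}, which are visibly maps of spectral sequences. For a set composition $\pi$, the composite $F_\pi\xrightarrow{i_\pi}F_k\xrightarrow{p_\pi}F_\pi$ together with $P_\pi$-equivariance gives chain maps $i_\pi^*:C^*(F_k)\to C^*(F_\pi)$ and $p_\pi^*:C^*(F_\pi)\to C^*(F_k)$ of $P_\pi$-complexes. By naturality of Proposition \ref{prop:general ss} in equivariant chain maps, combined with Proposition \ref{prop:naturality}, the endomorphism
\[f_\pi=\mathrm{cor}_{P_\pi}^{\Sigma_k}\circ p_\pi^*\circ i_\pi^*\circ\mathrm{res}^{\Sigma_k}_{P_\pi}\]
of $E(k)_2$ extends to an endomorphism of the spectral sequence $E(k)$. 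One point needs checking here. The maps $i_\pi$ are defined only up to homotopy, so we should either pick a strictly $P_\pi$-equivariant representative, which the construction via $\sqcup_\ell\mathbb{R}^n$ provides, or work with the first spectral sequence of Proposition \ref{prop:general ss} applied to $\Hom_{P_\pi}(W,C^*(F_\pi))$. By Proposition \ref{prop:endomorphism constant}, $f_\pi$ acts on the summand $H^*(\Sigma_k;M_\lambda)$ as multiplication by the integer $c_{\lambda,\pi}$. Any $\mathbb{Z}$-linear combination, and any composite, of the $f_\pi$ therefore also extends to $E(k)$ and acts diagonally on the decomposition of Corollary \ref{cor:second page}.

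It then suffices to find a polynomial $\rho_\mu$ in the $f_\pi$ whose eigenvalue on $H^*(\Sigma_k;M_\lambda)$ is $1$ for $\lambda\preccurlyeq\mu$ and $0$ otherwise. We work mod $p$ (coefficients $\mathbb{F}_p$, as in Section \ref{section:AM theorem}), since the Lagrange-interpolation idempotents $1-(f-c)^{p-1}$ make sense over $\mathbb{F}_p$. Concretely, if the eigenvalues of a family of commuting diagonal operators, read mod $p$, separate the set $\{\lambda\preccurlyeq\mu\}$ from its complement, then we can take products of the elementary idempotents
\[e_{\pi,c}=1-(f_\pi-c)^{p-1},\]
which project onto the summands where $c_{\lambda,\pi}\equiv c$ mod $p$, and sum these products over suitable $c$ to obtain $\rho_\mu$. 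Everything therefore reduces to the combinatorial claim that the vectors $(c_{\lambda,\pi}\bmod p)_\pi$ are constant on subordinacy classes and distinguish distinct classes. This is presumably the deferred double coset calculation of Section \ref{section:double cosets}.

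For that calculation, the natural choice is $\pi=\mu$, or more generally $\pi$ ranging over the set compositions associated with partitions. The constant $c_{\lambda,\pi}$ counts, with weights, the double cosets $P_\pi\sigma W_\lambda$ consisting of fittings, and each weight $[W_\lambda:P_\pi^{\sigma^{-1}}\cap W_\lambda]$ counts the ways the blocks of $\lambda$ are distributed into the blocks of $\pi$ compatibly with $\sigma$. Equivalently, $c_{\lambda,\pi}$ should equal the number of set partitions with underlying partition $\lambda$ that refine the fixed set composition $\pi$. Reducing mod $p$ with a Lucas-type argument, the free $C_p$-orbits obtained by permuting $p$ equal blocks should cancel. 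In particular, splitting a block of size $pr$ into $p$ blocks of size $r$ should not change the residue mod $p$, giving constancy on classes. Separation should follow by taking $\pi$ to be the maximal element $\mu'$ of another class and checking that $c_{\lambda,\mu'}\not\equiv0$ exactly when $\lambda\preccurlyeq\mu'$, using a triangularity argument together with the unique maximal of Lemma \ref{lem:unique maximals}.

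I expect this mod $p$ evaluation of $c_{\lambda,\pi}$ to be the main obstacle, in particular proving that $c_{\lambda,\mu}\not\equiv0$ mod $p$ exactly when $\lambda\preccurlyeq\mu$. The first thing I would try is to compute $c_{\lambda,\mu}$ mod $p$ as a count of fixed points under a $p$-subgroup of $W_\lambda$.
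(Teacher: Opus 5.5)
\textbf{There is a genuine gap: the combinatorial claim your argument reduces to is left unproven, and the route you sketch for it fails.}

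Your formal reduction is sound and is a genuinely different route from the paper's. Each $f_\pi$ is an endomorphism of $E(k)$ acting on $H^*(\Sigma_k;M_\lambda)$ by $c_{\lambda,\pi}$, and any polynomial in the $f_\pi$ with the right eigenvalues extends the projection. Everything then rests on the claim that the vectors $(c_{\lambda,\pi}\bmod p)_\pi$ are constant on subordinacy classes and separate them.

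This is not the paper's deferred calculation. Lemma \ref{lem:constant nonzero} only says that the diagonal constant $c_{\lambda,\mu}$ is a unit for $\lambda\preccurlyeq\mu$, with $\lambda$ special. Your sketch fails at two points.
\begin{enumerate}
\item $c_{\lambda,\pi}$ is not the number of set partitions of type $\lambda$ refining $\pi$. By Corollary \ref{cor:blocks and index} it is $|\widetilde B_{\lambda,\pi}(\pi)|$, the number of labeled block distributions. For instance $c_{(1^4),(2,2)}=6$, not $1$. With $p=3$ your count would even break constancy on the class $\{(1^4),(3,1)\}$, since $c_{(3,1),(2,2)}=0$.
\item The criterion ``$c_{\lambda,\mu'}\not\equiv0$ exactly when $\lambda\preccurlyeq\mu'$'' is false. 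The maps $i_{(k)}$ and $p_{(k)}$ are (homotopic to) the identity, so $f_{(k)}=\mathrm{id}$ and $c_{\lambda,(k)}=1$ for every $\lambda$. Yet $(k)$ is maximal, and, say, $(1,1)\not\preccurlyeq(2)$ for $p$ odd.
\end{enumerate}
Off-diagonal constants do not vanish. This is exactly why the paper never uses the $f_\pi$ alone to kill summands. It inserts the inductively constructed projections $\bigotimes_i\rho_{(\mu_i)}$ into the composite through $E(\mu)\cong\bigotimes_iE(\mu_i)$. It kills the summands with $\lambda\not\preccurlyeq\mu$ because $p_{\lambda\sigma^{-1}}\circ i_\mu$ factors through $\prod_ip_{\eta_i}$ with some $\eta_i\not\preccurlyeq(\mu_i)$. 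For $\lambda\preccurlyeq\mu$ it needs only the diagonal constant $c_{\lambda,\mu}$, and it obtains the case $\mu=(k)$ by complementation.

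\textbf{How your route can be completed.} Doing so gives a non-inductive proof.

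Constancy on classes follows from your $C_p$ idea, applied to labeled distributions. Cyclically permute the $p$ blocks of size $r$ created by a split. The fixed distributions are those sending all $p$ blocks to one target, and these correspond bijectively to distributions of the unsplit partition. The remaining distributions lie in free orbits. Equivalently, use $\sum_jx_j^{pr}\equiv(\sum_jx_j^r)^p$ in $q_{\lambda,\pi}$.

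For separation, the triangularity you mention is the right tool, but it yields invertibility rather than a zero pattern. Take $\nu,\nu'$ maximal.
\begin{enumerate}
\item A distribution of weight $\nu'$ has nonempty fibers, so $c_{\nu,\nu'}\neq0$ forces $\nu$ to refine $\nu'$.
\item For $\nu'=\nu$, such a distribution is a surjection between sets of equal size, hence a size-preserving bijection. So $c_{\nu,\nu}=\prod_s m_s(\nu)!$, where $m_s(\nu)<p$ is the multiplicity of $s$ in $\nu$. This is a unit mod $p$.
\end{enumerate}
Hence the matrix $(c_{\nu,\nu'}\bmod p)$ over maximal $\nu,\nu'$ is invertible. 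Its rows, which by constancy are the eigenvalue vectors of the classes, are therefore distinct. You may even take $\rho_\mu$ to be an $\mathbb{F}_p$-linear combination of the $f_{\nu'}$ instead of a product of idempotents.

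\textbf{Comparison with the paper.} Completed this way, your argument dispenses with the induction on $k$, the tensor decomposition of $E(\mu)$, and the separate treatment of $\mu=(k)$. It also handles all $\lambda\preccurlyeq\mu$ uniformly, special or not. The cost is computing the whole triangular matrix rather than only its diagonal.
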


Assuming this result, the theorem follows easily from our earlier general work regarding spectral subsequences.

\begin{proof}[Proof of Theorem \ref{thm:decomposition}]
By Lemma \ref{lem:unique maximals}, the set of partitions of $k$, partially ordered under subordinacy, is the disjoint union of the lower sets of its maximal elements. Thus, we have 
\[E(k)_2\cong \bigoplus_{\lambda\vdash k} H^*(\Sigma_k;M_\lambda)=\bigoplus_\mu\bigoplus_{\lambda\preccurlyeq\mu} H^*(\Sigma_k;M_\lambda).\] The theorem now follows from Lemma \ref{lem:induction step} by repeated use of Proposition \ref{prop:projection extends}.
\end{proof}

For the proof of Lemma \ref{lem:induction step}, we will use the maps in and maps out introduced above. Given a set composition $\pi$ of $k$, we write $E(\pi)$ for the spectral sequence obtained by applying Corollary \ref{cor:cartan leray} to the projection $F_\pi\to F_\pi/P_\pi$. Using Proposition \ref{prop:naturality}, the maps $i_\pi$ and $p_\pi$ studied in Section \ref{section:maps in maps out} induce maps of spectral sequences $E(\pi)\to E(k)$ and $E(k)\to E(\pi)$ given on the second pages by the natural maps $i_\pi^*\circ \mathrm{res}_{P_\pi}^{\Sigma_k}$ and $\mathrm{cor}_{P_\pi}^{\Sigma_k} \circ p_\pi^*$, respectively.

Our construction will be premised on composing these maps, the result of which we characterized in Proposition \ref{prop:endomorphism constant} in terms of the constants $c_{\lambda,\pi}$, concerning which we have the following key result, whose proof we defer to Section \ref{section:double cosets}.

\begin{lemma}\label{lem:constant nonzero}
If $\lambda\preccurlyeq\mu$, $\lambda$ is special, and $\mu$ is maximal under subordinacy, then $c_{\lambda,\mu}$ is invertible mod $p$.
\end{lemma}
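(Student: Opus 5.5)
The plan is to turn $c_{\lambda,\mu}$ into a count of block assignments, which becomes a coefficient of a polynomial, and then compute that coefficient mod $p$ using the Frobenius. Here $\mu$ is regarded as its canonical set composition and $\lambda$ as a fixed set partition.

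\emph{Step 1: reformulate $c_{\lambda,\mu}$ as a count.} The index $[W_\lambda:P_\mu^{\sigma^{-1}}\cap W_\lambda]$ is the size of the $W_\lambda$-orbit of the coset $P_\mu\sigma$ in $P_\mu\backslash\Sigma_k$. Moreover, being a fitting is a property of this coset (indeed of the double coset). Summing over double cosets, $c_{\lambda,\mu}$ is therefore the number of cosets $P_\mu\sigma$ with $\sigma\in\Sigma_{\lambda,\mu}$.

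Cosets $P_\mu\sigma$ correspond to set compositions $\mu\sigma$ whose $i$th block has size $\mu_i$. The fitting condition says that every block of $\lambda$ lies inside a single block of $\mu\sigma$. Hence
\[c_{\lambda,\mu}=\#\Big\{f:\{\text{blocks of }\lambda\}\to\langle\ell(\mu)\rangle\ :\ \textstyle\sum_{\beta\in f^{-1}(i)}|\beta|=\mu_i\text{ for all } i\Big\}.\]
Writing $m_r$ for the number of blocks of $\lambda$ of size $r$ and $P_r=\sum_{i=1}^{\ell(\mu)}x_i^r$, this says that $c_{\lambda,\mu}$ is the coefficient of $\prod_i x_i^{\mu_i}$ in $\prod_r P_r^{m_r}$.

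\emph{Step 2: reduce from $\lambda$ to $\mu$ mod $p$.} Over $\mathbb{F}_p$ we have $P_r^{p}=P_{pr}$. Merging $p$ blocks of size $r$ into one block of size $pr$ therefore leaves the polynomial $\prod_r P_r^{m_r}$ unchanged mod $p$. Since $\lambda\preccurlyeq\mu$, repeating this along a chain from $\lambda$ up to $\mu$ gives
\[c_{\lambda,\mu}\equiv c_{\mu,\mu}\pmod p.\]
By Lemma \ref{lem:specialness}, $\mu$ is again special. By maximality, each block size $q$ occurs in $\mu$ with multiplicity $e_q<p$. It thus remains to show that $c_{\mu,\mu}$ is prime to $p$.

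\emph{Step 3: compute $c_{\mu,\mu}$.} The aim is $c_{\mu,\mu}=\prod_q e_q!$, which is invertible mod $p$ because every $e_q<p$.

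For $n$ odd, or for $p=2$, all block sizes are powers of $p$. I induct from the largest size $p^A$. All blocks of $\mu$ of size less than $p^a$ have total size at most $\sum_{b<a}(p-1)p^b=p^a-1$. So a target index of size $p^A$ cannot be filled by smaller blocks alone, and must receive exactly one block of size $p^A$. These blocks can be matched to the $e_{p^A}$ targets of that size in $e_{p^A}!$ ways. Removing them and inducting gives the product formula.

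\emph{Main obstacle: $n$ even and $p$ odd.} Here blocks of size $2p^a$ also occur. The crude bound on smaller blocks now reads $\sum_{b<a}(p-1)(p^b+2p^b)=3(p^a-1)$, which is too weak. For example, a target of size $2p^a$ might be filled by two blocks of size $p^a$.

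I would first try to sharpen the induction by looking at the largest valuation $a$. Targets and blocks of valuation $a$ have sizes $p^a$ or $2p^a$, and all smaller blocks together sum to less than $3p^a$. Considering everything mod $p^a$, each target of valuation $a$ must be filled by valuation-$a$ blocks up to a remainder of smaller blocks summing to a multiple of $p^a$. A counting argument should then show that the remaining assignments either reduce to the induction or come in free orbits of a cyclic group of order $p$. For instance, $\mathbb{Z}/p$ could cyclically permute the small blocks that refill one size-$p^a$ target, so that those assignments contribute $0$ mod $p$.

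If that fails, I would fall back on a direct computation of the coefficient in $\prod_q P_q^{e_q}$ with $e_q<p$. This would track one block size at a time from the largest down, using the Frobenius identities again to discard terms that cancel mod $p$.
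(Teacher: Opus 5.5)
\textbf{Gap in Step 3.} Your Steps 1 and 2 are correct. Step 1 is exactly the paper's combination of Corollary~\ref{cor:blocks and index} and Proposition~\ref{prop:polynomial}. Step 2, which uses $P_r^p\equiv P_{pr}$ to merge the blocks of $\lambda$ up to $\mu$, is valid.

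The gap is Step 3 when $n$ is even and $p$ is odd. There you only list strategies that might work, namely a $\mathbb{Z}/p$-orbit cancellation or a direct coefficient computation. So as written, the lemma is not proved in that case.

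The obstacle comes from running the induction on the largest \emph{targets}. Run it on the largest \emph{blocks} instead. Let $Q$ be the largest part of $\mu$. Each of the $e_Q$ blocks of size $Q$ must go to a target of size at least $Q$, that is, to one of the $e_Q$ targets of size exactly $Q$. Such a target can accept at most one of them. So $f$ restricts to a bijection between size-$Q$ blocks and size-$Q$ targets, and those targets are then full. The remaining blocks distribute over the remaining targets subject to the same weight condition.

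Induction then gives $c_{\mu,\mu}=\prod_q e_q!$ exactly, for every partition $\mu$. Your worry about a size-$2p^a$ target absorbing two size-$p^a$ blocks disappears, because the size-$2p^a$ block would then have no target left. There is nothing to cancel, so no $\mathbb{Z}/p$-action is needed.

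\textbf{Comparison with the paper.} With this repair your route is genuinely different from the paper's. The paper applies the Frobenius in the opposite direction. It collapses $q_{\lambda,\mu}$ mod $p$ to $(\sum_j x_j)^k$, or to $(\sum_j x_j)^{k'}(\sum_j x_j^2)^{k''/2}$ when $n$ is even. It then reads off a multinomial coefficient, shown $\equiv\prod_r a_r!$ by Lucas' theorem. In the even case it needs a $p$-adic digit and parity argument to isolate the unique surviving term.

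Your version replaces Lucas' theorem and the odd/even case split with an elementary count. It also never uses that $\lambda$ (or $\mu$) is special. It shows $c_{\lambda,\mu}\equiv\prod_q e_q(\mu)!\not\equiv 0$ mod $p$ for every $\lambda\preccurlyeq\mu$ with $\mu$ maximal.
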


Assuming this result for the moment, we may complete the argument.

\begin{proof}[Proof of Lemma \ref{lem:induction step}]
We prove the lemma by strong induction on $k$, the case $k=1$ being trivial. Fixing a maximal $\mu$, we may assume by Proposition \ref{prop:projection extends} that $\mu\neq (k)$. Consider the map $\rho_\mu$ defined as the composite
\[E(k)\xrightarrow{} E(\mu)\cong \bigotimes_{i=1}^{\ell(\mu)} E(\mu_i)\xrightarrow{\bigotimes \rho_{(\mu_i)}} \bigotimes_{i=1}^{\ell(\mu)} E(\mu_i)\cong E(\mu)\xrightarrow{} E(k),\] where the first map is induced by $i_\mu$, the second is induced by $p_\mu$, and $\rho_{(\mu_i)}$ is the map of spectral sequences obtained by induction. Note that we use our assumption that $\mu\neq (k)$ to guarantee that $\mu_i< k$ for each $i$. By Proposition \ref{prop:projection extends}, it suffices to show that this composite restricts to $H^*(\Sigma_k;M_\lambda)$ trivially or as an automorphism according to whether $\lambda\not\preccurlyeq \mu$ or $\lambda\preccurlyeq\mu$. In the latter case, this claim follows from Proposition \ref{prop:endomorphism constant} and Lemma \ref{lem:constant nonzero}. 

In the former case, reasoning as in the proof of Proposition \ref{prop:endomorphism constant}, we have the equality
{\begin{align*}
i_\mu^*\circ\mathrm{res}_{P_\mu}^{\Sigma_k}\circ\mathrm{cor}^{\Sigma_k}_{W_\lambda}\circ p_\lambda^*
&=\sum_{P_\mu\sigma W_\lambda: \,\sigma\in\Sigma_{\lambda,\mu}} i_\mu^*\circ   p_{\lambda\sigma^{-1}}^*\circ \mathrm{res}^{W_\lambda^\sigma}_{P_\mu\cap W_\lambda^\sigma}\circ\sigma_\lambda^*.
\end{align*}} Writing $f_\sigma$ for the summand of this expression indexed by $\sigma\in \Sigma_{\lambda,\mu}$, and recalling that $\mathrm{cor}_{W_\lambda}^{\Sigma_k}\circ p_\lambda^*$ is an isomorphism of $H^*(W_\lambda; H^{d(\lambda)}(F_\lambda))$ onto $H^*(\Sigma_k;M_\lambda)$ by Proposition \ref{prop:induction description} and Shapiro's lemma, it suffices to show that $f_\sigma$ maps $H^*(W_\lambda;H^{d(\lambda)}(F_\lambda))$ into the kernel of $\bigotimes_i \rho_{(\mu_i)}$. 

Fixing $\sigma$, we have that $\lambda\sigma^{-1}$ fits into $\mu$ by definition, thereby inducing a partition of each block of $\mu$, say $\eta_i$ of $\mu_i$, and the map $p_{\lambda\sigma^{-1}}\circ i_\mu$ factors up to homotopy as the composite
\[F_{\mu}\cong \prod_{i=1}^{\ell(\mu)} F_{\mu_i}\xrightarrow{\prod_i p_{\eta_i}} \prod_{i=1}^{\ell(\mu)} F_{\eta_i}\cong F_{\lambda\sigma^{-1}}.\] It follows that the image of $H^*(W_\lambda;H^{d(\lambda)}(F_\lambda))$ under $f_\sigma$ lies in the summand of $\bigotimes_{i=1}^{\ell(\mu)} E(\mu_i)_2$ indexed by $(\eta_1,\ldots, \eta_{\ell(\mu)})$.  If $\eta_i\not\preccurlyeq (\mu_i)$ for some $i$, then this summand lies in the kernel of $\bigotimes_i \rho_{(\mu_i)}$, as desired. But $\eta_i\preccurlyeq (\mu_i)$ for every $i$ if and only if $\lambda\preccurlyeq \mu$, so the claim follows.
\end{proof}

\section{A double coset calculation}\label{section:double cosets}

The goal of this section is to prove Lemma \ref{lem:constant nonzero}, which concerns the constant $c_{\lambda,\mu}$ from Section \ref{section:maps in maps out}. Given that this constant is defined as a sum over double cosets represented by elements of the set $\Sigma_{\lambda,\mu}\subseteq\Sigma_k$ of fittings, our first goal will be to find a combinatorial interpretation for this set of double cosets.

\subsection{Block distributions} We begin with the following general combinatorial concept.

\begin{definition}
Let $\rho$ and $\pi$ be set compositions. A \emph{(distinguishable) block distribution} of $\rho$ into $\pi$ is a function $d:\langle \ell(\rho)\rangle \to \langle \ell(\pi)\rangle$. The \emph{weight} of $d$ is the tuple \[w(d)=\left(\sum_{i\in d^{-1}(j)} |\rho_i|\right)_{j=1}^{\ell(\pi)}.\]
\end{definition}

We write $\widetilde B_{\rho,\pi}$ for the set of all distinguishable block distributions of $\rho$ into $\pi$. This set inherits a right action of $W_\rho$ via the action on $\langle \ell(\rho)\rangle$.

\begin{definition}
Let $\rho$ and $\pi$ be set compositions. The set of \emph{indistinguishable block distributions of $\rho$ into $\pi$} is the set of orbits $B_{\rho,\pi}=\widetilde B_{\rho,\pi}/W_\rho$.
\end{definition}

It is not hard to see that the action of $W_\rho$ preserves weight, so that we may speak of the weight of an indistinguishable block distribution. We write $\widetilde B_{\rho,\pi}(w)$ and $B_{\rho,\pi}(w)$ for the respective subsets of block distributions of fixed weight $w\in\mathbb{Z}_{\geq0}^{\ell(\pi)}$. We will be particularly interested in the case $w=(|\pi_i|)_{i=1}^{\langle\ell(\pi)\rangle}$, which we abusively indicate by $B_{\rho,\pi}(\pi)$, and so on. In order to better understand these sets, we connect them to the fittings of Definition \ref{def:fitting}.

\begin{proposition}\label{prop:cosets and blocks}
Let $\rho$ and $\pi$ be set compositions of $\langle k\rangle$. The assignment $\sigma\mapsto d_\sigma$ induces the indicated bijections in the following commutative diagram:
\[\xymatrix{
P_\pi\backslash \Sigma_{\rho,\pi}\ar[d]\ar[r]^-\cong& \widetilde B_{\rho,\pi}(\pi)\ar[d]\\
P_\pi\backslash \Sigma_{\rho,\pi}/W_\rho\ar[r]^-\cong&  B_{\rho,\pi}(\pi).
}\]
\end{proposition}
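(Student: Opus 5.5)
The plan is to verify that $\sigma\mapsto d_\sigma$ is well defined on the relevant orbit sets, lands in weight $\pi$, and is bijective at the top level, then pass to $W_\rho$-orbits. First, for $\sigma\in\Sigma_{\rho,\pi}$, the filler $d_\sigma:\langle\ell(\rho)\rangle\to\langle\ell(\pi)\rangle$ satisfies $\pi\sigma=d_\sigma\rho$. Hence $\sigma$ bijects $\bigsqcup_{i\in d_\sigma^{-1}(j)}\rho_i$ onto $\pi_j$, so $\sum_{i\in d_\sigma^{-1}(j)}|\rho_i|=|\pi_j|$ and $w(d_\sigma)=(|\pi_j|)_j$. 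For $\tau\in P_\pi$ we have $\pi\tau\sigma=\pi\sigma$, so $\tau\sigma$ is again a fitting with $d_{\tau\sigma}=d_\sigma$. Thus the top map descends to $P_\pi\backslash\Sigma_{\rho,\pi}\to\widetilde B_{\rho,\pi}(\pi)$.

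For injectivity, suppose $d_\sigma=d_{\sigma'}$. Then $\pi\sigma=\pi\sigma'$, so $\pi(\sigma'\sigma^{-1})=\pi$, i.e.\ $\sigma'\sigma^{-1}\in P_\pi$. For surjectivity, take $d$ of weight $\pi$. For each $j$, the set $\bigsqcup_{i\in d^{-1}(j)}\rho_i$ has cardinality $|\pi_j|$, so we may choose a bijection onto $\pi_j$. Assembling these over $j$ gives $\sigma\in\Sigma_k$ with $\pi\sigma=d\rho$, hence $d_\sigma=d$.

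Next comes equivariance for the bottom row. I will make the right action of $W_\rho$ on $\widetilde B_{\rho,\pi}$ explicit. An element $\omega\in W_\rho$ satisfies $\rho\omega=\bar\omega\rho$ for a unique permutation $\bar\omega$ of $\langle\ell(\rho)\rangle$, and the action is $d\cdot\omega=d\bar\omega$. For $\sigma\in\Sigma_{\rho,\pi}$ we then compute
\[
\pi\sigma\omega=d_\sigma\rho\omega=d_\sigma\bar\omega\rho,
\]
so $\sigma\omega\in\Sigma_{\rho,\pi}$ and $d_{\sigma\omega}=d_\sigma\bar\omega$. The main obstacle is bookkeeping rather than substance: the left/right conventions must be fixed so that this matches the action used to define $B_{\rho,\pi}$. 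If the paper's convention is instead $\bar\omega^{-1}$, the orbits are unchanged, so nothing essential is affected.

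Given this, the top bijection is $W_\rho$-equivariant, with right multiplication on $P_\pi\backslash\Sigma_{\rho,\pi}$, which is well defined since left and right multiplications commute. An equivariant bijection induces a bijection on orbit sets, giving $P_\pi\backslash\Sigma_{\rho,\pi}/W_\rho\cong B_{\rho,\pi}(\pi)$. The weight-$\pi$ subset is $W_\rho$-stable because the action preserves weight. Commutativity of the square is immediate, since both vertical maps are the orbit projections.
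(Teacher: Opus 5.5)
Your proof is correct and has the same structure as the paper's: first establish the top bijection, with descent because $\pi\tau=\pi$ for $\tau\in P_\pi$, then pass to $W_\rho$-orbits using equivariance. The only real difference is injectivity: the paper uses a suitable $\tau\in P_\pi$ to normalize $\sigma$ to a unique order-respecting representative determined by $d_\sigma$, whereas you observe directly that $d_\sigma=d_{\sigma'}$ forces $\pi\sigma=\pi\sigma'$, hence $\sigma'\sigma^{-1}\in P_\pi$, which is cleaner. You also spell out the weight condition, surjectivity, and the $W_\rho$-equivariance, which the paper leaves implicit.
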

\begin{proof}
Since the assignment $\sigma\mapsto d_\sigma$ is clearly $W_\rho$-equivariant, the claim regarding the top arrow implies the claim regarding the bottom after passing to orbits. For the first, descent to the quotient follows readily from the commutative diagram
\[\xymatrix{
\langle k\rangle\ar[d]_-\rho \ar[r]^-\sigma& \langle k\rangle\ar[d]^-\pi\ar[r]^-{\tau}&\langle k\rangle\ar[dl]^-\pi\\
\langle \ell(\rho)\rangle\ar[r]^-{d_\sigma}& \langle \ell(\pi)\rangle,
}\] where $\tau\in P_\pi$. For bijectivity, it suffices to show that any two choices of $\sigma$ in this diagram differ by the action of $P_\pi$. But, by choosing $\tau$ appropriately, we may arrange that $\sigma$ respects the natural ordering within each block of $\rho$ and the natural ordering of the blocks of $\rho$ within each block of $\pi$. Since these two requirements uniquely specify $\sigma$, the claim follows.
\end{proof}

\begin{corollary}\label{cor:blocks and index}
For set compositions $\rho$ and $\pi$ of $\langle k\rangle$, we have
\[|\widetilde B_{\rho,\pi}(\pi)|=\sum_{P_\pi\sigma W_\rho,\,\sigma\in \Sigma_{\rho,\pi}}[W_\rho:P^{\sigma^{-1}}_\pi\cap W_\rho].\]
\end{corollary}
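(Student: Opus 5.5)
Since $\widetilde B_{\rho,\pi}(\pi)$ is a right $W_\rho$-set, I will count it orbit by orbit. That gives
\[|\widetilde B_{\rho,\pi}(\pi)|=\sum_{[d]\in B_{\rho,\pi}(\pi)}[W_\rho:\mathrm{Stab}_{W_\rho}(d)].\]
By Proposition \ref{prop:cosets and blocks}, the orbits $[d]$ correspond bijectively to the double cosets $P_\pi\sigma W_\rho$ with $\sigma\in\Sigma_{\rho,\pi}$, via $d=d_\sigma$. Once the stabilizer is identified, the formula follows term by term. Concretely, the claim is that $\mathrm{Stab}_{W_\rho}(d_\sigma)=P_\pi^{\sigma^{-1}}\cap W_\rho$, where $P_\pi^{\sigma^{-1}}=\sigma^{-1}P_\pi\sigma$, matching the conjugation convention used in the proof of Proposition \ref{prop:endomorphism constant}.

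For the stabilizer, I will use the top bijection of Proposition \ref{prop:cosets and blocks}, namely $P_\pi\backslash\Sigma_{\rho,\pi}\cong\widetilde B_{\rho,\pi}(\pi)$. It is $W_\rho$-equivariant for the right actions, where $W_\rho$ acts on $P_\pi\backslash\Sigma_{\rho,\pi}$ by right multiplication. Because the bijection is equivariant, stabilizers agree. So $w\in W_\rho$ fixes $d_\sigma$ exactly when $P_\pi\sigma w=P_\pi\sigma$. This is equivalent to $\sigma w\sigma^{-1}\in P_\pi$, i.e.\ $w\in \sigma^{-1}P_\pi\sigma\cap W_\rho$.

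There are two points to check. First, right multiplication by $W_\rho$ must preserve $\Sigma_{\rho,\pi}$, so that the action is defined. If $\sigma$ is a fitting with filler $d_\sigma$ and $w\in W_\rho$ induces $\bar w$ on $\langle\ell(\rho)\rangle$ with $\rho w=\bar w\rho$, then $\pi\sigma w=d_\sigma\rho w=d_\sigma\bar w\rho$. Hence $\sigma w$ is again a fitting, with $d_{\sigma w}=d_\sigma\bar w$. Second, this formula $d_{\sigma w}=d_\sigma\bar w$ is exactly the equivariance of $\sigma\mapsto d_\sigma$, and it already appears in the proof of Proposition \ref{prop:cosets and blocks}.

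The main obstacle I expect is keeping the left/right and conjugation conventions consistent: the inverse in $P_\pi^{\sigma^{-1}}$ versus $W_\rho^\sigma$, and whether $W_\rho$ acts through $\bar w$ or $\bar w^{-1}$. I will settle these by matching the convention fixed in the proof of Proposition \ref{prop:endomorphism constant}, where $W_\lambda^\sigma=\sigma W_\lambda\sigma^{-1}$ and $[W_\lambda^\sigma:P_\pi\cap W_\lambda^\sigma]=[W_\lambda:P_\pi^{\sigma^{-1}}\cap W_\lambda]$. In any case, conjugation preserves the index, so the final count does not depend on this choice.
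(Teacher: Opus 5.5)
Your proof is correct and is the paper's argument with the details filled in: orbit--stabilizer on $\widetilde B_{\rho,\pi}(\pi)$, transported through the $W_\rho$-equivariant bijection $P_\pi\backslash\Sigma_{\rho,\pi}\cong\widetilde B_{\rho,\pi}(\pi)$ of Proposition \ref{prop:cosets and blocks}, with the stabilizer of $P_\pi\sigma$ identified as $\sigma^{-1}P_\pi\sigma\cap W_\rho$. One small caution: your closing remark that conjugation makes the convention irrelevant covers only the identity $[W_\rho:\sigma^{-1}P_\pi\sigma\cap W_\rho]=[\sigma W_\rho\sigma^{-1}:P_\pi\cap\sigma W_\rho\sigma^{-1}]$, not the opposite convention $\sigma P_\pi\sigma^{-1}\cap W_\rho$, which can give a different index, so it does matter that you fixed the convention correctly, as you did.
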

\begin{proof}
By Proposition \ref{prop:cosets and blocks} and the orbit-stabilizer theorem, it suffices to note that $P^{\sigma^{-1}}_{\pi} \cap W_{\rho}$ is the stabilizer of $P_\pi\sigma$ under the action of $W_\rho$ on $P_\pi\backslash \Sigma_{\rho,\pi}$.
\end{proof}

We close this section by recording a generating function for the cardinalities of the sets of distinguishable block distributions of fixed weight.

\begin{proposition}\label{prop:polynomial}
Given $w\in\mathbb{Z}_{\geq0}^{\ell(\rho)}$, the cardinality of $\widetilde B_{\rho,\pi}(w)$ is equal to the coefficient of $x_1^{w_1}\cdots x_{\ell(\pi)}^{w_{\ell(\pi)}}$ in the polynomial 
\[q_{\rho,\pi}=\prod_{i=1}^{\ell(\rho)}\sum_{j=1}^{\ell(\pi)} x_j^{|\rho_i|}.\]
\end{proposition}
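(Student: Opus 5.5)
This is a direct generating-function argument. The plan is to expand the product $q_{\rho,\pi}=\prod_{i=1}^{\ell(\rho)}\sum_{j=1}^{\ell(\pi)} x_j^{|\rho_i|}$ by distributivity. Expanding a product of $\ell(\rho)$ factors amounts to choosing one term from each factor. Choosing the term $x_{d(i)}^{|\rho_i|}$ from the $i$th factor for each $i$ is the same as specifying a function $d:\langle\ell(\rho)\rangle\to\langle\ell(\pi)\rangle$, that is, a distinguishable block distribution of $\rho$ into $\pi$. This gives
\[
q_{\rho,\pi}=\sum_{d\in\widetilde B_{\rho,\pi}}\prod_{i=1}^{\ell(\rho)}x_{d(i)}^{|\rho_i|},
\]
where distinct $d$ index distinct terms of the expansion, although the monomials they produce may coincide.

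Next, regroup the monomial attached to $d$ by the value of $d(i)$:
\[
\prod_{i=1}^{\ell(\rho)}x_{d(i)}^{|\rho_i|}=\prod_{j=1}^{\ell(\pi)}x_j^{\sum_{i\in d^{-1}(j)}|\rho_i|}=x_1^{w(d)_1}\cdots x_{\ell(\pi)}^{w(d)_{\ell(\pi)}}.
\]
So the exponent vector of this monomial is exactly the weight $w(d)$.

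Finally, collect like terms. The coefficient of $x_1^{w_1}\cdots x_{\ell(\pi)}^{w_{\ell(\pi)}}$ is the number of $d\in\widetilde B_{\rho,\pi}$ with $w(d)=w$, which is by definition $|\widetilde B_{\rho,\pi}(w)|$.

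No step presents a real obstacle; the only care needed is in the bookkeeping. First, the expansion must be indexed by functions, not by resulting monomials, so that coincident monomials are counted with multiplicity. Second, $w$ should be read as lying in $\mathbb{Z}_{\geq0}^{\ell(\pi)}$; the statement's $\ell(\rho)$ appears to be a typo. For any $w$ whose entries do not sum to $\sum_i|\rho_i|$, both sides are zero, since the polynomial is homogeneous of that degree and no distribution can have such a weight.
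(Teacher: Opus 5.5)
Your argument is correct and is exactly the routine distributivity expansion the paper has in mind; the paper records this proposition without giving a proof. You are also right that $w$ should lie in $\mathbb{Z}_{\geq0}^{\ell(\pi)}$, as the monomial $x_1^{w_1}\cdots x_{\ell(\pi)}^{w_{\ell(\pi)}}$ makes clear.
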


\subsection{Proof of Lemma \ref{lem:constant nonzero}}
Assume first that $n$ is odd. In this case, the blocks of $\lambda$ and of $\mu$ are powers of $p$ by specialness (see Lemma \ref{lem:specialness}). Moreover, writing the base $p$ expansion $k=\sum_{r=0}^s a_r p^r$ with $0\leq a_r<p$, maximality and specialness force $\mu$ to be the partition with $a_r$ many blocks of size $p^r$ for each $r$.

Now, by Corollary \ref{cor:blocks and index} and Proposition \ref{prop:polynomial}, the quantity in question is equal to the coefficient of $x_1^{\mu_1}\cdots x_{\ell(\mu)}^{\mu_{\ell(\mu)}}$ in the polynomial 
\begin{align*}
q_{\lambda,\mu}&=\prod_{i=1}^{\ell(\lambda)}\sum_{j=1}^{\ell(\mu)} x_j^{\lambda_i}\\
&\equiv\prod_{i=1}^{\ell(\lambda)}\left(\sum_{j=1}^{\ell(\mu)} x_j\right)^{\lambda_i}\mod p\\
&=\left(\sum_{j=1}^{\ell(\mu)} x_j\right)^{k},
\end{align*} where we have used the freshman's dream in the second line. Thus, from the multinomial theorem, we conclude in this case that 
\begin{align*}
\sum_{P_\mu\sigma W_\lambda,\,\sigma\in \Sigma_{\lambda,\mu}}[W_\lambda:P^{\sigma^{-1}}_\mu\cap W_\lambda]&\equiv \binom{k}{\mu_1,\ldots, \mu_{\ell(\mu)}}\mod p\\
&\equiv \prod_{r=0}^{s}a_r!\mod p,
\end{align*}
where the last line follows from Lucas' theorem for multinomial coefficients. Since $p$ does not divide $a_r!$ for $0\leq a_r< p$, the claim follows in this case.

Assuming instead that $n$ is even and that every block of $\lambda$ (and hence of $\mu$) is twice a power of $p$, the same argument applies after substituting $x_j^2$ for $x_j$ and $k/2$ for $k$. 

In the general case, we may write $k=k'+k''$ and regard $\lambda$ as the union of a partition $\lambda'$ of $k$, each of whose blocks is a power of $p$, with a partition $\lambda''$ of $k''$, each of whose blocks is twice a power of $p$. The same remarks apply to $\mu$, and we have $\lambda'\preccurlyeq\mu'$ with $\mu'$ maximal under subordinacy (resp. $\lambda''$, $\mu''$). Arguing as before, we conclude that
\[q_{\lambda,\mu}\equiv \left(\sum_{j=1}^{\ell(\mu)} x_j\right)^{k'}\left(\sum_{j=1}^{\ell(\mu)} x_j^2\right)^{\frac{k''}{2}}\mod p.\] Applying the multinomial theorem twice, a general term in the expansion of this expression takes the form 
\[\binom{k'}{a_1,\ldots, a_{\ell(\mu)}}\binom{\frac{k''}{2}}{b_1,\ldots, b_{\ell(\mu)}}\prod_{j=1}^{\ell(\mu)}x_{j}^{a_j+2b_j}.\]  We may restrict our attention to terms of this form such that $a_j+2b_j=\mu_j$, which implies that $a_j$ is odd, and in particular nonzero, whenever $\mu_j$ is odd.  By Lucas' theorem, since we are working mod $p$, we may further restrict our attention to the terms in which each $p$-adic digit of $k'$ is the sum of the corresponding $p$-adic digits of the $a_j$. But the odd $\mu_j$ form the $p$-adic expansion of $k'$, as above, so these two observations force $a_j=\mu_j$ for $\mu_j$ odd and $a_j=0$ for $\mu_j$ even, which in turn forces $b_j=\frac{\mu_j}{2}$ for $\mu_j$ even and $b_j=0$ for $\mu_j$ odd.
In this last remaining case, the coefficient of interest is the product of two coefficients already shown to be nonzero mod $p$. 

\bibliographystyle{amsalpha}
\bibliography{references}
\end{document}